\newtheorem{theo}{Theorem}[section]
\newtheorem{lemma}[theo]{Lemma}
\newtheorem{ex}[theo]{Example}
\newtheorem{prop}[theo]{Proposition}
\newtheorem{cor}[theo]{Corollary}
\theoremstyle{definition}
\newtheorem{rem}[theo]{Remark}
\DeclareMathOperator{\Span}{{\rm span}}
\DeclareMathOperator{\supp}{{\rm supp}} 
\DeclareMathOperator{\Stab}{{\rm Stab}} 
\DeclareMathOperator{\Ad}{{\rm Ad}} 
\DeclareMathOperator{\vol}{{\rm vol}} 
\DeclareMathOperator{\Cc}{{\rm C}_{{\rm c}}} 
\newcommand{\vect}[1]{{\boldsymbol{#1}}}
\newcommand{\vb}{\vect{b}}
\newcommand{\vh}{\vect{h}}
\newcommand{\vp}{\vect{p}}
\newcommand{\vr}{\vect{r}}
\newcommand{\vs}{\vect{s}}
\newcommand{\vu}{\vect{u}}
\newcommand{\vv}{\vect{v}}
\newcommand{\vx}{\vect{x}}
\newcommand{\vz}{\vect{z}}
\newcommand{\field}[1]{\mathbb{#1}} 
\newcommand{\R}{\field{R}}
\newcommand{\N}{\field{N}}
\newcommand{\Q}{\field{Q}}
\newcommand{\Z}{\field{Z}}
\providecommand{\abs}[1]{\lvert#1\rvert}
\providecommand{\norm}[1]{\lVert#1\rVert}
\providecommand{\trn}[1]{{\,{}^{\bf t}\!#1}}
\providecommand{\card}[1]{\#(#1)}
\providecommand{\ignore}[1]{}
\renewcommand{\setminus}{\smallsetminus}
\newcommand{\inv}{^{-1}}
\newcommand{\la}[1]{\mathfrak{\lowercase{#1}}}
\newcommand{\cK}{\mathcal{K}}
\newcommand{\cU}{\mathcal{U}}
\newcommand{\cV}{\mathcal{V}}
\begin{document}

\title[Khinchin theorem for quadratic varieties]{Khinchin theorem for
  integral points on quadratic varieties} 
\author{Alexander Gorodnik and Nimish A. Shah} 
\address{School of Mathematics \\ University of
  Bristol \\ Bristol BS8 1TW, UK} 
\email{a.gorodnik@bristol.ac.uk}
\address{Tata Institute of Fundamental Research, Mumbai 400005, INDIA}
\email{nimish@math.tifr.res.in}

\begin{abstract}
  We prove an analogue the Khinchin theorem for the Diophantine
  approximation by integer vectors lying on a quadratic variety. The
  proof is based on the study of a dynamical system on a homogeneous
  space of the orthogonal group. We show that in this system, generic
  trajectories visit a family of shrinking subsets infinitely often.
\end{abstract}



\maketitle

\section{Introduction}

Let us consider the following question in the theory of Diophantine
approximation: given a vector $v\in\mathbb{R}^{d}$, is it possible to
approximate $v$ by a sequence of rational vectors $\frac{x}{y}$ that
come from integer points lying on the quadratic variety
$x_1^2\pm\cdots\pm x_d^2-y^2=1$?  Namely, we are interested in the
integral solutions $(x,y)\in\mathbb{Z}^{d+1}$ of
\begin{equation}\label{eq:dioph0}
  \left\|v-\frac{x}{y}\right\|<\epsilon,
  \quad\quad x_1^2\pm\cdots\pm x_d^2-y^2=1. 
\end{equation}
It is easy to see that if (\ref{eq:dioph0}) has infinitely many
integral solutions for every $\epsilon>0$, then the vector $v$ has to
lie on the variety $\mathcal{Q}=\{v_1^2\pm\cdots\pm v_d^2=1\}$.  On
the other hand, it follows, for instance, from the results in
\cite{G+O+S:Satake} (see \cite[Corollary~1.7, \S2.1]{G+O+S:Satake})
that for every $v\in \mathcal{Q}$ and $\epsilon>0$, (\ref{eq:dioph0})
has infinitely many solutions.  In this paper, we consider a more
delicate question about the order of approximation in
(\ref{eq:dioph0}).  For a given function $\psi:(0,\infty)\to
(0,\infty)$, we study whether there are infinitely many integral
solutions $(x,y)\in\mathbb{Z}^{d+1}$ of
\begin{equation}\label{eq:dioph2}
  \left\|v-\frac{x}{y}\right\|<\psi(|y|),
  \quad\quad x_1^2\pm\cdots\pm x_d^2-y^2=1;
\end{equation}
and show that the answer is determined by integrability of the
function $t^{d-2}\psi(t)^{d-1}$.  This result is analogous to the
Khinchin theorem, which we now recall. A vector $v\in \mathbb{R}^{d}$
is called {\it $\psi$-approximable} if the inequality
$$
\left\|v-\frac{x}{y}\right\|<\psi(|y|)
$$
has infinitely many integral solutions $(x,y)\in
\mathbb{Z}^{d+1}$. The Khinchin theorem determines the size of the set
of $\psi$-approximable vectors:

\begin{theo}[Khinchin]
  Let $\psi:(0,\infty)\to (0,\infty)$ be a non-increasing
  function. Then the following statements hold:
  \begin{enumerate}
  \item[(i)] If $\int_1^\infty t^d \psi(t)^{d}dt=\infty$, then almost
    every vector in $\mathbb{R}^d$ is $\psi$-approximable.
  \item[(ii)] If $\int_1^\infty t^d\psi(t)^{d}dt<\infty$, then almost
    every vector in $\mathbb{R}^d$ is not $\psi$-approximable.
  \end{enumerate}
\end{theo}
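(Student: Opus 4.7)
The plan is to reduce to the unit cube by $\Z^d$-periodicity and apply the two Borel--Cantelli lemmas to the natural family of approximation events. For each $y\in\N$, set
\[
A_y=\bigl\{v\in[0,1]^d:\|v-x/y\|<\psi(y)\text{ for some }x\in\Z^d\bigr\},
\]
so that the set of $\psi$-approximable vectors in $[0,1]^d$ is precisely $\limsup_{y\to\infty} A_y$. There are at most $y^d$ points of the form $x/y$ in $[0,1]^d$, each surrounded by a ball of volume $c_d\psi(y)^d$; when $\psi(y)\le 1/(2y)$ these balls are disjoint, giving two-sided bounds $\mu(A_y)\asymp y^d\psi(y)^d$ (and one may reduce to this regime, since a larger $\psi$ trivially satisfies the divergence hypothesis). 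Monotonicity of $\psi$ also makes $\sum_y y^d\psi(y)^d$ comparable to $\int_1^\infty t^d\psi(t)^d\,dt$ by comparison on unit intervals.

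Part (ii) is then immediate from the first Borel--Cantelli lemma: the convergence hypothesis gives $\sum_y\mu(A_y)<\infty$, so $\mu(\limsup A_y)=0$.

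For part (i), the events $A_y$ are not independent, so I would apply the Borel--Cantelli lemma for quasi-independent events: if $\sum_y\mu(A_y)=\infty$ and
\[
\sum_{\substack{m,n\le N \\ m\ne n}}\mu(A_m\cap A_n)\le C\Bigl(\sum_{y\le N}\mu(A_y)\Bigr)^{2}
\]
for all $N$, then $\mu(\limsup A_y)\ge 1/C>0$. The arithmetic content is the bound on $\mu(A_m\cap A_n)$: a point $v\in A_m\cap A_n$ produces integers $x,x'$ with $\|nx-mx'\|_\infty<mn(\psi(m)+\psi(n))$, and counting such pairs using the separation $\|x/m-x'/n\|\ge 1/(mn)$ for distinct fractions yields $\mu(A_m\cap A_n)\le C_d\,\mu(A_m)\mu(A_n)$ uniformly in $m\ne n$. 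Finally, to upgrade positive measure to full measure I would re-run the localized Borel--Cantelli estimate on each dyadic subcube of $\T^d$ (the computation is essentially translation-invariant for large $y$) and conclude by the Lebesgue density theorem.

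I expect the main obstacle to be the quasi-independence bound on $\mu(A_m\cap A_n)$: this is the arithmetic step where the different-denominator separation must be leveraged against the natural correlation between the two events, and the bound must hold uniformly over all pairs $m\ne n\le N$ as $N\to\infty$.
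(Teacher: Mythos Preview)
The paper does not prove this statement: Khinchin's theorem is quoted in the introduction as classical background motivating the main result (Theorem~1.2 on quadratic varieties), and no proof is supplied. So there is no ``paper's own proof'' to compare against; what follows are comments on the correctness of your outline.

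Your overall scheme (periodicity, the events $A_y$, the easy Borel--Cantelli for (ii), the divergence Borel--Cantelli for (i), and localization plus Lebesgue density to pass from positive to full measure) is the standard route and is sound. The gap is precisely where you suspect it, but it is sharper than you indicate: the uniform pairwise bound $\mu(A_m\cap A_n)\le C_d\,\mu(A_m)\mu(A_n)$ is \emph{false}. The separation $\|x/m-x'/n\|\ge 1/(mn)$ holds only for \emph{distinct} rationals; when $x/m=x'/n$ there is no separation at all, and these coincident rationals are exactly the points with denominator dividing $\gcd(m,n)$. They contribute an extra term of order $\gcd(m,n)^d\psi(\max(m,n))^d$ to $\mu(A_m\cap A_n)$. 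For instance, if $m\mid n$ then $\gcd(m,n)=m$ and this error term is comparable to $\mu(A_m)$ itself, which dominates $\mu(A_m)\mu(A_n)$ whenever $\mu(A_n)$ is small.

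The fix is well known but does require genuine work: one proves instead an overlap estimate of the shape
\[
\mu(A_m\cap A_n)\ \ll\ \mu(A_m)\mu(A_n)\ +\ \gcd(m,n)^d\,\psi(\max(m,n))^d,
\]
and then shows that the double sum of the error terms over $m,n\le N$ is $O\bigl(\sum_{y\le N}\mu(A_y)\bigr)$ (hence negligible against the square of a divergent sum). This last step is where the monotonicity of $\psi$ is actually used in an essential way; without it the gcd sums need not be controlled. See, e.g., Harman's \emph{Metric Number Theory} or Sprind\v{z}uk's book (both cited in the paper) for the details.
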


A function $\psi: (0,\infty)\to (0,\infty)$ is called {\it
  quasi-conformal} if there exists $c>0$ such that
$$
\hbox{$\psi(ht)\leq c\,\psi(t)$ for all $h\in[1/2,2]$ and $t>0$.}
$$
It was proved by Sullivan \cite[\S3]{Sul} that the Khinchin theorem
also holds for measurable quasi-conformal functions (which are not
necessarily non-increasing).

Note that question (\ref{eq:dioph2}) is about approximation by radial
projections of integral points lying on the variety $x_1^2\pm\cdots\pm
x_d^2-y^2=1$ to the plane $\{y=1\}$, and when $\psi$ is
quasi-conformal, it does not depend on a choice of the radial
projection.  Hence, it is natural to restate question (2) in a more
general geometric fashion.

Let $X$ be an algebraic variety in the Euclidean space
$\mathbb{R}^{d+1}$.  We denote by $\pi:\mathbb{R}^{d+1}\backslash
\{(0,\ldots, 0)\}\to S^{d}$ the radial projection on the unit sphere
$S^d$.  We say that a vector $v\in S^d$ is {\it
  $(X,\psi)$-approximable} if the inequality
$$
\|\pi(x)-v\|<\psi(\|x\|).
$$
has infinitely many solutions $x\in
X(\mathbb{Z}):=X\cap\mathbb{Z}^{d+1}$.

Now for quasi-conformal functions $\psi$, problem (\ref{eq:dioph2})
can be restated as a question about $(X,\psi)$-approximable vectors,
where $X=\{x_1^2\pm\cdots\pm x_d^2-y^2=1\}$, and the Khinchin theorem
is about $(\mathbb{R}^{d+1},\psi)$-approximable vectors.

We define the {\it boundary} $\partial X$ of a variety $X$ to consist
of the points $\lim_{n\to\infty} \pi(x_n)$ with $x_n\in
X\backslash\{(0,\ldots,0)\}$, $\|x_n\|\to \infty$.  Note that if
$\psi(t)\to 0$ as $t\to\infty$, then the set of
$(X,\psi)$-approximable vectors is always contained in $\partial X$.

\subsection{Quadratic varieties} \label{sec:quad} Let $X$ be a
nonsingular rational quadratic; that is, $X=\{w\in\R^{d+1}:Q(w)=m\}$
for some $m\in \mathbb{Q}\setminus\{0\}$, where $Q$ is a rational
nondegenerate indefinite quadratic form.  In this case,
\[
\partial X=\{x\in\R^{d+1}:Q(x)=0\}\cap S^{d}.
\]

We assume that $d\ge 3$ and $X(\Z)\neq\emptyset$.

Let $G=\hbox{O}(Q)$ be the orthogonal group. By Witt's theorem $G$
acts transitively on $X$.  The variety $X$ supports a $G$-invariant
measure, which we denote by $\vol$.  We also consider the smooth
action of $G$ on $S^{d}$ by $g\cdot \pi(w)=\pi(gw)$ for all $g\in G$
and $w\in \R^{d+1}$. Under this action, $\partial X$ is homogeneous
space of $G$ admitting a unique $G$-semi-invariant probability measure
$\mu_\infty$.

\begin{theo} \label{thm:main-qform} Let the notation be as above and
  $\psi:(0,\infty)\to (0,\infty)$ a measurable quasi-conformal
  function. Then the following statements hold:
  \begin{enumerate}
  \item[(i)] If $\int_1^\infty t^{d-2}\psi(t)^{d-1}\,dt=\infty$, then
    $\mu_\infty$-almost every $v\in \partial X$ is
    $(X,\psi)$-approximable.
  \item[(ii)] If $\int_1^\infty t^{d-2}\psi(t)^{d-1}\,dt<\infty$, then
    $\mu_\infty$-almost every $v\in \partial X$ is not
    $(X,\psi)$-approximable.
  \end{enumerate}
\end{theo}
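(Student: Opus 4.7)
My plan is to handle the two halves separately: part~(ii) by direct Borel--Cantelli using integer-point counting, and part~(i) by a reverse Borel--Cantelli / quasi-independence argument fed by effective equidistribution in the homogeneous space $Y=G(\Z)\backslash G(\R)$.

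For part~(ii), partition $X(\Z)$ into dyadic shells $S_n=\{x\in X(\Z):2^n\le \|x\|<2^{n+1}\}$. The Duke--Rudnick--Sarnak and Eskin--McMullen counting theorem applied to $X=G/H$ gives $\#S_n\ll 2^{n(d-1)}$. For $x\in S_n$ set $B(x)=\{v\in\partial X:\|\pi(x)-v\|<\psi(\|x\|)\}$. Since $Q(\pi(x))=m/\|x\|^2$ has size $O(2^{-2n})$ and $\partial X=\{Q=0\}\cap S^d$ is a smooth $(d-1)$-dimensional submanifold of $S^d$, a direct geometric estimate (together with quasi-conformality of $\psi$) yields $\mu_\infty(B(x))\ll\psi(2^n)^{d-1}$. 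Summation then gives
\[
\sum_n\mu_\infty\!\Big(\bigcup_{x\in S_n}B(x)\Big)\ll \sum_n 2^{n(d-1)}\psi(2^n)^{d-1}\asymp\int_1^\infty t^{d-2}\psi(t)^{d-1}\,dt<\infty,
\]
and Borel--Cantelli yields~(ii).

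For part~(i), let $E_n=\bigcup_{x\in S_n}B(x)$. I would first prove the matching lower bound $\mu_\infty(E_n)\gg 2^{n(d-1)}\psi(2^n)^{d-1}$, so that $\sum_n\mu_\infty(E_n)=\infty$ by the divergence hypothesis; this amounts to showing that $\pi(S_n)$ is well-spread in $\partial X$ at scale $\psi(2^n)$. Both this lower bound and the subsequent quasi-independence step rest on translating the distribution of $\pi(S_n)$ into dynamics on $Y$: via the identification $X(\Z)=G(\Z)\cdot x_0$, the distribution of $\pi(S_n)$ in $\partial X$ is controlled by equidistribution of expanding translates of the $H$-orbit in $Y$, made effective by decay of matrix coefficients on $L^2(Y)$ for $G=\mathrm{O}(Q)$ with $d\ge 3$. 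The key step is the quasi-independence estimate
\[
\mu_\infty(E_n\cap E_m)\le C\,\mu_\infty(E_n)\,\mu_\infty(E_m)\qquad (n\ne m);
\]
the Chung--Erd\H{o}s lemma then gives $\mu_\infty(\limsup E_n)>0$, and a zero-one law coming from ergodicity of $G(\Z)$ on $(\partial X,\mu_\infty)$ upgrades this to full measure.

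The main obstacle is this quasi-independence across scales. Unwinding the definitions, it reduces to a \emph{joint} effective equidistribution of $\pi(S_n)\times\pi(S_m)$ in $\partial X\times\partial X$ on scales $\psi(2^n)\times\psi(2^m)$, equivalently to a sharp count of pairs $(x,x')\in S_n\times S_m$ whose radial projections simultaneously land near a common $v\in\partial X$. Converting the mixing input into such a joint estimate requires test functions adapted to the shrinking targets, truncations near the cusps of $Y$ where $H$-orbit translates can temporarily escape to infinity, and explicit error terms small enough to sum over all pairs of scales $(n,m)$. Once these are in place, the remainder of the divergence argument is a formal consequence of the Borel--Cantelli framework.
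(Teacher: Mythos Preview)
Your overall architecture for part~(i) --- pass to the homogeneous space, prove quasi-independence of a family of shrinking sets, apply a converse Borel--Cantelli, and upgrade positive measure to full measure via ergodicity of $\Gamma$ on $\partial X$ --- matches the paper exactly. The divergence is in the \emph{mechanism} for quasi-independence, and that is where your proposal has a genuine gap.

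You want the estimate $\mu_\infty(E_n\cap E_m)\le C\,\mu_\infty(E_n)\mu_\infty(E_m)$ to come from effective equidistribution (decay of matrix coefficients). But the only hypothesis on $\psi$ is quasi-conformality, so after the reduction one may assume $e^t\psi(e^t)\to 0$; the target sets then become arbitrarily thin in the contracting direction. Effective mixing produces an error of size $e^{-\delta|t_n-t_m|}$ times Sobolev norms of smoothed indicators, and those Sobolev norms blow up as the sets shrink, with no link to the product $\mu(E_n)\mu(E_m)$. There is no rate assumption on $\psi$ to play against this blow-up, so the inequality you need does not follow from the mixing input you invoke. Your last paragraph essentially lists the obstacles without overcoming them; ``once these are in place'' hides the whole difficulty.

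The paper avoids this entirely. It exploits that $H$ is the fixed-point group of an involution $\sigma$ with $\sigma(a_t)=a_{-t}$, so that $\sigma$ interchanges $U^-$ and $U^+$ and one has the \emph{transversal} decomposition $\mathfrak g=\mathfrak u^-\oplus(\mathfrak z+\mathfrak h)$. For large $t$ the box $a_t(\Psi_2 Z_{r_0}H_{r_0})a_{-t}$ is thin along $U^-$ and stretched along $U^+$, so its intersection with $\Psi_1 Z_{r_0}H_{r_0}$ is governed by a local diffeomorphism whose Jacobian is bounded below uniformly (Proposition~3.2). This gives, for every $t\ge t_0$ and every measurable $\Psi_1,\Psi_2$ in a fixed neighbourhood of $e$,
\[
\vol_{G/\Gamma}(D_1\cap a_t D_2)\le C\,\vol_{U^-}(\Psi_1)\vol_{U^-}(\Psi_2),
\]
with no smoothing and no spectral gap. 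The resulting quasi-independence holds for \emph{all} quasi-conformal $\psi$, which is precisely what your mixing scheme cannot deliver without extra hypotheses.

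Your part~(ii) is different from the paper's argument (the paper pushes everything into $G/\Gamma$ and uses that $Hg_0\Gamma/\Gamma$ has finite volume, rather than counting on $\partial X$), but your approach is sound: $\#S_n\ll 2^{n(d-1)}$ is the DRS/EM count for the affine quadric, and $\mu_\infty(B(x))\ll\psi(2^n)^{d-1}$ is immediate since $\partial X$ is a smooth $(d-1)$-manifold. So the convergence half goes through as you wrote it; the substantive issue is only in~(i).
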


\begin{rem}
  \begin{enumerate}
  \item It is important to note that Theorem \ref{thm:main-qform}
    holds only almost everywhere. There are examples of not
    $(X,\psi)$-approximable vectors under assumption (i) and examples
    of $(X,\psi)$-approximable vectors under assumption (ii) (see
    Section \ref{sec:examples}).

  \item Since the function $\psi$ is quasi-conformal, it is clear that
    the claim of Theorem \ref{thm:main-qform} does not depend on a
    choice of the norm. Hence, we assume that $\|\cdot\|$ is the
    standard Euclidean norm.

  \item In this paper, we consider the variety $\{Q=m\}$ with $m\ne
    0$.  The structure of the singular variety $\{Q=0\}$ is
    different. It seems that the integral/rational points on the later
    variety can studied using the methods from \cite{drutu}.
  \end{enumerate}
\end{rem}

For $v\in \partial X$, we define the {\it cusp} $C(v,\psi)$ at $v$:
\begin{equation}
  \label{eq:23}
  C(v,\psi)=\{x\in X:\, \|\pi(x)-v\|<\psi(\|x\|)\}.  
\end{equation}
Using this notation, Theorem \ref{thm:main-qform} can be restated as
follows:

\begin{theo}\label{th:2}
  For $\mu_\infty$-almost every $v\in \partial X$,
  \[\vol(C(v,\psi))=\infty\quad\Longleftrightarrow\quad
  \#(X(\mathbb{Z})\cap C(v,\psi))=\infty.
  \]
\end{theo}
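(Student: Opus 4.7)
\textbf{Proof plan for Theorem~\ref{th:2}.}
The plan is to deduce Theorem~\ref{th:2} from Theorem~\ref{thm:main-qform} via the asymptotic volume identity
\[
\vol(C(v,\psi))\;\asymp\;\int_{1}^{\infty}t^{d-2}\psi(t)^{d-1}\,dt,\qquad v\in\partial X.
\]
Once this is established, the condition $\#(X(\Z)\cap C(v,\psi))=\infty$ is literally the statement that $v$ is $(X,\psi)$-approximable, so Theorem~\ref{thm:main-qform} yields that for $\mu_\infty$-a.e.\ $v$ both sides of Theorem~\ref{th:2} are equivalent to divergence of the integral above.

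First I would reduce to a single base point. Let $K:=\mathrm{O}(d+1)\cap G$, the subgroup of $G$ preserving both $Q$ and the Euclidean norm. For $k\in K$ a direct computation gives $k\cdot C(v,\psi)=C(kv,\psi)$, since $k$ is simultaneously an isometry of the Euclidean norm and of $X$; hence $G$-invariance of $\vol$ implies $\vol(C(kv,\psi))=\vol(C(v,\psi))$. In the standard form $Q=x_1^2+\cdots+x_p^2-x_{p+1}^2-\cdots-x_{p+q}^2$ one has $K=\mathrm{O}(p)\times\mathrm{O}(q)$ and $\partial X\simeq S^{p-1}\times S^{q-1}$, on which $K$ acts transitively; thus $\vol(C(v,\psi))$ is independent of $v\in\partial X$ and it suffices to compute it at one convenient base point.

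Next I would introduce cuspidal coordinates $(t,\xi)$ on the end of $X$, where $t=\|x\|\in[R_0,\infty)$ and $\xi\in\partial X$ is the radial projection of $\pi(x)\in S^d$ onto the cone $\partial X$. Writing $\pi(x)=\xi+\delta\,n(\xi)+O(\delta^2)$ with $n(\xi)$ the unit normal to $\partial X$ in $T_\xi S^d$, the identity $Q(\pi(x))=m/t^2$ together with $Q(\xi)=0$ forces $\delta\asymp t^{-2}$, so the cusp condition $\|\pi(x)-v\|<\psi(t)$ becomes $\|\xi-v\|<\psi(t)+O(t^{-2})$. The error is absorbed by the quasi-conformality of $\psi$ whenever $\psi(t)\gtrsim t^{-2}$; in the opposite regime both sides of the target asymptotic are trivially convergent, because $t^{d-2}\psi(t)^{d-1}\lesssim t^{-d}$ is integrable and the corresponding part of the cusp has finite volume.

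Finally I would derive the volume form. The $G$-invariant measure $\vol$ coincides, up to a scalar, with the Leray form $dx_1\wedge\cdots\wedge dx_{d+1}/dQ$, which by the coarea formula equals the induced Euclidean surface measure on $X$ divided by $|\nabla Q(x)|\asymp t$. A pullback computation via the cone approximation $x\approx t\xi$ (whose Jacobian is governed by the metric $dt^2+t^2 g_{\partial X}$) gives Euclidean surface measure $\asymp t^{d-1}\,dt\,d\mu_\infty(\xi)$, the factor $t^{d-1}$ reflecting the radial inflation of a $(d-1)$-dimensional slice of $\partial X$. Dividing by $t$ yields $d\vol\asymp t^{d-2}\,dt\,d\mu_\infty(\xi)$, and since a $\psi(t)$-ball in the $(d-1)$-dimensional manifold $\partial X$ has $\mu_\infty$-measure $\asymp\psi(t)^{d-1}$, Fubini produces the claimed asymptotic and closes the reduction. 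The main subtle point is the bookkeeping of the $O(t^{-2})$ error in the cusp description; the quasi-conformality hypothesis is exactly what is needed to absorb it.
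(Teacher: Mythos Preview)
Your overall strategy coincides with the paper's: Theorem~\ref{th:2} is reduced to Theorem~\ref{thm:main-qform} via a volume identity for the cusps, which in the paper is Theorem~\ref{thm:vol-est}. The difference lies entirely in how you propose to prove that volume identity. The paper works in group-theoretic coordinates: it parametrizes $X$ near infinity by $x=a_tu(\vs)w_0$ with $a_t$ diagonal and $u(\vs)$ unipotent, in which the invariant volume is simply $dt\,d\vs$, and it reduces the cusp $C(\bar f_1,\psi)$ (after a comparison lemma exchanging $\|x\|$ for its leading coordinate) to the explicit region $\{\|\vs\|\le e^t\psi(e^t)\}$. Your route is instead the geometric one: use the compact group $K=\mathrm{O}(p)\times\mathrm{O}(q)$ to get exact $v$-independence, then identify $\vol$ with the Leray form and compute in polar coordinates $(t,\xi)$ over the asymptotic cone.

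Both arguments are valid. Your $K$-reduction is cleaner than the paper's Lemma~\ref{lema:qc} (which only gives $g\cdot C(v,\psi)\subset C(g\cdot v,\kappa\psi)$ and then relies on quasi-conformality), but it presupposes orthogonal diagonalization of $Q$ relative to the Euclidean norm, which is harmless here. Your coarea computation is more conceptual, though the paper's explicit $a_tu(\vs)$ parametrization is not wasted effort: these are exactly the coordinates in which the shrinking-target Theorem~\ref{thm:bcl} is applied in Section~\ref{sec:proof}, so the paper gets the cusp description needed for the dynamical argument for free. The one place your sketch is thinner than the paper is the handling of the regime $\psi(t)\lesssim t^{-2}$: you correctly observe that both the integral and the cusp volume are finite there, but the paper makes this quantitative by isolating the set $\mathcal{T}=\{t:e^{2t}\psi(e^t)\le|m|\}$ and bounding its contribution explicitly (see \eqref{eq:Omega1}); since quasi-conformality does not let you cleanly split $[1,\infty)$ into two regimes, a careful write-up of your approach would need an analogous device.
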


It follows from Theorem \ref{thm:vol-est} below that Theorem
\ref{th:2} is equivalent to Theorem \ref{thm:main-qform}, and the
condition $\vol(C(v,\psi))=\infty$ is independent of $v\in\partial X$.

\ignore{ In view of this, Theorem~\ref{thm:main-qform} is a
  consequence of the following:

  \begin{theo}\label{thm:cusps}
    \begin{enumerate}
    \item If $\vol(C(v,\psi))=\infty$ for some $v\in\partial X$ then
      $C(v,\psi)\cap X(\mathbb{Z})$ is infinite for a.e.\
      $v\in\partial X$.
    \item If $\vol(C(v,\psi))<\infty$ for some $v\in\partial X$, then
      $C(v,\psi)\cap X(\mathbb{Z})$ is finite for a.e.\ $v\in\partial
      X$.
    \end{enumerate}
  \end{theo}

  We will also obtain the following analogue of
  Theorem~\ref{thm:main-qform}.

  \begin{cor} \label{cor:dio-soln} Let $n\geq 4$ and let $Q_0$ be a
    nondegenerate non-negative definite quadratic form in $n-1$
    variables and with integral coefficients. Suppose that there exist
    $\vx_0\in Z^{n-1}$ and $y_0\in\Z$ such that
    $Q_0(\vx_0)-y_0^2=:m\neq 0$. Let $\psi:[1,\infty)\to\infty$ be a
    quasi-conformal map. Then for almost any $v$ in the variety
    $X_0:=\{v\in R^{n-1}:Q_0(v)=1\}$, with respect to the
    $\hbox{O}(Q_0)$-invariant measure on the variety, we have
    \begin{equation} \label{eq:soln} \# \{(\vx, y)\in \Z^{n-1}\times
      \Z: \norm{\frac{\vx}{y}- v}<\psi(\abs{y}),\;
      Q_0(\vx)-y^2=m\}=\infty
    \end{equation}
    if and only if
    \begin{equation}
      \label{eq:45}
      \int_0^\infty(e^t\psi(e^t))^{n-1}\,dt=\infty.    
    \end{equation}
  \end{cor}

  In fact, the above study was motivated by the following result,
  which is a specialization of \cite[Cor.1.7,\S2.1]{G+O+S:Satake} to
  the quadratic variety case.

  \begin{cor} \label{cor:dio-count} Let the notation be as in
    Corollary~\ref{cor:dio-soln}. Then Then given $\epsilon>0$, there
    exists a computable constant $c=c(\epsilon)>0$ such that for every
    $v\in\R^{n-1}$ with $Q_0(v)=1$,
    \begin{equation} \label{eq:count} \# \{(\vx, y)\in \Z^{n-1}\times
      \Z: \norm{\frac{\vx}{y}- v}<\epsilon,\;
      Q_0(\vx)-y^2=m,\;\norm{(\vx,y)}\leq T\}\sim c\cdot T^{n-2}.
    \end{equation}
  \end{cor}

  This counting result corresponds to the case of
  $\psi(t)=\epsilon$. It is not clear whether one can obtain such
  exact counting estimate if $\psi(t)\to 0$ as $t\to\infty$ for every
  $v\in X_0$. It is important to note that for general $\psi$ the
  above results hold only almost every where. There are examples of
  quadratic forms where \eqref{eq:soln} holds for some $v\in\R^{n-1}$
  even though \eqref{eq:45} fails. Conversely, there exists $\psi$
  such that \eqref{eq:45} holds but \eqref{eq:soln} fails to hold for
  some $v\in X_0$.  }

\subsection{Shrinking targets} 
We will prove Theorem~\ref{thm:main-qform} using a shrinking target
property for a flow on a suitable homogeneous space.  Let $G$ be a
noncompact real algebraic group and $H$ an algebraic subgroup which is
the set of fixed points of an involution $\sigma$.  We fix a
nontrivial one parameter subgroup $\{a_t\}$ of $G$ such that
$\sigma(a_t)=a_{-t}$.  Let $Z$ denote the centralizer of $\{a_t\}$ in
$G$ and $U^-$ the contracting horospherical subgroup of $a_t$, i.e.,
$$
U^-=\{g\in G:\, a_tga_t^{-1}\to e\;\hbox{as $t\to\infty$}\}.
$$

Given a lattice $\Gamma$ in $G$, we consider the flow $a_t$ on the
space $G/\Gamma$. We are interested in visits of generic trajectories
of $a_t$ to shrinking boxes of the form $\Psi_tB\Gamma\subset
G/\Gamma$ where $\Psi_t\subset U^-$ and $B\subset ZH$. The following
is our main result:

\begin{theo}
  \label{thm:bcl}
  Given a bounded measurable subset of $B\subset ZH$ of positive
  measure, there exists a neighborhood $\mathcal{O}$ of identity in
  $U^-$ such that for any measurable subsets $\Psi_n\subset
  \mathcal{O}$, an increasing sequence $t_n\to\infty$ of real numbers,
  and $y_0\in G/\Gamma$, the following statements hold:
  \begin{enumerate}

  \item[(i)] If $\inf_{n\in\N} (t_{n+1}-t_n)>0$ and $\sum_{n=1}^\infty
    \vol_{U^-}(\Psi_{n})=\infty$, then
    \[
    \vol_{G/\Gamma}(\{z\in G/\Gamma:\card{\{n\in \N:\, a_{t_n}^{-1}
      z\in \Psi_{n}By_0\}}=\infty\})>0.
    \]

  \item[(ii)] If $\sum_{n=1}^\infty \vol_{U^-}(\Psi_{n})<\infty$, then
    \[
    \vol_{G/\Gamma}(\{z\in G/\Gamma:\card{\{n\in\N:\, a_{t_n}^{-1}
      z\in \Psi_{n}B y_0\}}=\infty\})=0.
    \]
  \end{enumerate}
\end{theo}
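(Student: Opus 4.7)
The plan is to recast the theorem as a Borel--Cantelli problem on the probability space $G/\Gamma$ equipped with Haar measure, for the sequence of sets
\[
A_n \dfn \{z\in G/\Gamma:\, a_{t_n}\inv z\in\Psi_n By_0\}=a_{t_n}\Psi_n By_0.
\]
Since Haar measure on $G/\Gamma$ is $a_t$-invariant, $\vol(A_n)=\vol(\Psi_n By_0)$. The subgroups $U^-$ and $ZH$ are transverse at the identity, a consequence of the involution structure together with the root-space decomposition of $\Ad(a_1)$, which gives $\mathfrak{g}=\Lie(U^-)\oplus\Lie(ZH)$. Hence, for a small enough neighborhood $\cO$ of the identity in $U^-$, the map $(\psi,b)\mapsto\psi by_0$ from $\cO\times B$ to $G/\Gamma$ is finite-to-one with bounded Jacobian, yielding constants $c_1,c_2>0$ depending on $B$ with
\[
c_1\,\vol_{U^-}(\Psi_n)\;\le\;\vol(A_n)\;\le\;c_2\,\vol_{U^-}(\Psi_n).
\]

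For part (ii), this upper bound and the convergence hypothesis give $\sum\vol(A_n)<\infty$, so the standard Borel--Cantelli lemma yields $\vol(\limsup A_n)=0$.

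For part (i), the lower bound gives $\sum\vol(A_n)=\infty$, so I invoke a quantitative Borel--Cantelli criterion (such as the Sprind\v{z}uk--Schmidt or Chung--Erd\H{o}s inequality): it suffices to exhibit a constant $C>0$ with
\[
\sum_{m,n\le N}\vol(A_m\cap A_n)\;\le\;C\Big(\sum_{n\le N}\vol(A_n)\Big)^{\!2}
\]
for infinitely many $N$. To bound pairwise intersections, I appeal to exponential mixing of $\{a_t\}$ on $G/\Gamma$, which follows from the Howe--Moore theorem together with Cowling--Haagerup--Howe type bounds on matrix coefficients. After translating by $a_{t_n}$ and writing $s=t_n-t_m>0$ for $m<n$,
\[
\vol(A_m\cap A_n)=\int_{G/\Gamma}\mathbf{1}_{\Psi_m By_0}(a_s w)\,\mathbf{1}_{\Psi_n By_0}(w)\,dw.
\]
Since $\Psi_m\subset U^-$ lies in the contracting horospherical direction, $a_s\Psi_m a_{-s}\subset U^-$ shrinks $\Psi_m$ by a factor $O(e^{-\delta s})$ for a $\delta>0$ determined by the root spectrum. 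Replacing each indicator by a smooth approximation $\phi_j$ obtained by convolving only in the $ZH$-direction, the $L^1$-norms satisfy $\|\phi_j\|_1\asymp\vol_{U^-}(\Psi_j)$ while the transverse Sobolev norms are controlled by $c(B)\,\vol_{U^-}(\Psi_j)$ with $c(B)$ independent of $j$. Exponential mixing then gives
\[
\vol(A_m\cap A_n)\;\le\;\big(c_3+c_4\,e^{-\delta(t_n-t_m)}\big)\,\vol_{U^-}(\Psi_m)\vol_{U^-}(\Psi_n).
\]
Using the separation hypothesis $\inf_n(t_{n+1}-t_n)>0$, the error terms sum geometrically, and standard manipulations furnish the quasi-independence inequality, from which $\vol(\limsup A_n)>0$.

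\textbf{Main obstacle.} The principal difficulty is the pairwise correlation bound. Characteristic functions of the thin boxes $\Psi_nBy_0$ have Sobolev norms that blow up as $\vol_{U^-}(\Psi_n)\to 0$, so an off-the-shelf application of effective mixing fails. The remedy is to smooth only transverse to $U^-$: since $\Psi_n$ is already aligned with the stable horospherical direction of $a_t$, the flow itself supplies the averaging along $U^-$, and only the $ZH$-component needs to be regularised. Making this restricted-smoothing argument precise, and verifying that the resulting mixing rate depends only on the transverse Sobolev norm, is the technical heart of the proof, and is where the transversality of $U^-$ and $ZH$ plays its essential role.
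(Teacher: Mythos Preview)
Your setup and part (ii) match the paper. For part (i), your route diverges substantially, and the gap you yourself flag is real and unresolved.

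The paper does \emph{not} use mixing for the quasi-independence estimate. Its Theorem~\ref{thm:vol-B_t} gives directly
\[
\vol_{G/\Gamma}\bigl(a_{t_m}\Psi_m Z_{r_0}H_{r_0}g\Gamma \,\cap\, a_{t_n}\Psi_n Z_{r_0}H_{r_0}g\Gamma\bigr)\le C\,\vol_{U^-}(\Psi_m)\vol_{U^-}(\Psi_n)
\]
for $|t_n-t_m|\ge t_0$, via an elementary transversality computation (Proposition~\ref{thm:inter-vol}): writing elements of the intersection in $U^-ZU^+$-coordinates and using that $\alpha_t(H)$ drifts toward $U^+$ as $t\to\infty$ (since $\sigma$ swaps $U^\pm$), one builds a map $\Phi_{\zeta,t}$ from the intersection into $\Psi_1\times Z_{\kappa r_0}\times\alpha_t(\Psi_2)$ with Jacobian bounded below. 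A separate lattice-point count (Proposition~\ref{p:lifts}) bounds the number of $\Gamma$-translates contributing, and the factor $\det(\Ad(a_t)|_{\mathfrak u^+})$ from that count cancels exactly against the Jacobian of $\alpha_t|_{U^-}$. No spectral gap, no smoothing, no Sobolev norms; this is the ``transversality'' mechanism the introduction advertises as different from the mixing-based literature.

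Your approach, by contrast, needs two ingredients not in evidence. First, exponential mixing of $a_t$ on $G/\Gamma$: the theorem assumes only a real algebraic $G$ and a lattice $\Gamma$, with no irreducibility or spectral-gap hypothesis, so this is an added assumption (harmless for the orthogonal-group application, but extraneous to the stated result). Second, and more seriously, you need a mixing estimate whose rate is controlled by a Sobolev norm taken only in the $ZH$-directions, so that the raw-indicator behaviour along $U^-$ does not spoil the bound. This is precisely the obstacle you name, and you do not resolve it: standard effective-mixing bounds require full regularity, and a function that remains an indicator along $U^-$ has unbounded Sobolev norm regardless of how you mollify transversally. Arguments of this flavour can sometimes be pushed through (typically by replacing raw mixing with effective equidistribution of expanding horospherical translates), but they are delicate and are not a one-line appeal to Cowling--Haagerup--Howe. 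So as written, part (i) is a strategy with its key lemma missing, whereas the paper's transversality argument closes the loop without any spectral input.
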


The problem of shrinking targets, that is, the problem about visits of
trajectories to a family shrinking subsets, has been an active topic
of research over the past decades (see
\cite{bv,ck,Do,Ga,hp1,hp2,KM,Mau,Sul}). It seems that in the context
of partially hyperbolic systems, there are two main approaches to this
question.  One is based on (strong) mixing properties of the flow, and
the other uses geometric properties of the space such as negative
curvature.  As usual, the crucial step is to show that the sets
$\{a_{t_n}\Psi_{n}B\Gamma\}$ are quasi-independent (see Proposition
\ref{prop:dynbcl} and Theorem \ref{thm:vol-B_t}).  Our proof of
quasi-independence is quite different from the previous works and is
based on the simple observation that the sets $a_{t_n}(\Psi_{n}B)
a_{t_n}^{-1}$ and $a_{t_m}(\Psi_{m}B) a_{t_m}^{-1}$ are
``transversal'' if $|n-m|$ is sufficiently large (see the proof of
Proposition \ref{thm:inter-vol}).

As a consequence of Theorem \ref{thm:bcl}, we obtain

\begin{cor}
  \label{cor:bcl} Suppose that $\Psi_{n+1}\subset\Psi_{n}$ for all
  $n\in\N$, $\sum_{n=1}^\infty \vol_G(\Psi_{n})=\infty$, and the
  action of $T=a_{t_0}$ on $G/\Gamma$ is ergodic for some $t_0>0$.
  Then for almost every $z \in G/\Gamma$,
$$
\card{\{n\in \N:\, T^{-n} z\in \Psi_{n}By_0\}}=\infty.
$$
\end{cor}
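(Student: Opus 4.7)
The plan is to apply Theorem \ref{thm:bcl}(i) to the arithmetic sequence $t_n=nt_0$ to produce a positive-measure set of points whose $T^{-n}$-orbit visits $\Psi_n By_0$ infinitely often, and then use the ergodicity of $T$ to upgrade this to a set of full measure. The monotonicity $\Psi_{n+1}\subset\Psi_n$ is exactly the feature that makes the recurrence set essentially $T$-invariant, and so permits this upgrade.

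Setting $t_n=nt_0$ yields $\inf_n(t_{n+1}-t_n)=t_0>0$, so the gap hypothesis holds. Since $\{\Psi_n\}$ is decreasing, discarding finitely many initial indices (which affects neither the divergence of $\sum\vol(\Psi_n)$ nor the ``infinitely often'' conclusion) lets one assume $\Psi_n\subset\mathcal{O}$ for all $n$, where $\mathcal{O}$ is the neighborhood of the identity supplied by Theorem \ref{thm:bcl}. Applying Theorem \ref{thm:bcl}(i) then yields a measurable set
\[
A=\{z\in G/\Gamma:\card{\{n\in\N:T^{-n}z\in\Psi_nBy_0\}}=\infty\}
\]
with $\vol_{G/\Gamma}(A)>0$.

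To upgrade, write $E_n=T^n(\Psi_nBy_0)$ so that $A=\bigcap_N\bigcup_{n\ge N}E_n$. The hypothesis $\Psi_{n+1}\subset\Psi_n$ gives
\[
T^{-1}E_n=T^{n-1}(\Psi_nBy_0)\subset T^{n-1}(\Psi_{n-1}By_0)=E_{n-1},
\]
whence $T^{-1}A\subset A$. Since $T$ preserves the $G$-invariant probability measure on $G/\Gamma$, this inclusion is an equality mod null sets; ergodicity of $T$ then forces $\vol_{G/\Gamma}(A)\in\{0,1\}$, and positivity from the previous step gives $\vol_{G/\Gamma}(A)=1$, which is the desired conclusion.

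The substantive content of the argument is the brief invariance calculation above, which crucially uses the nesting of the targets; no serious obstacle arises, since once Theorem \ref{thm:bcl}(i) is available the corollary amounts to an ergodic-theoretic upgrade from positive to full measure in the style of a one-sided Borel--Cantelli zero-one law.
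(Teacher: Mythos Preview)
Your proof is correct and follows essentially the same route as the paper's: set $t_n=nt_0$, invoke Theorem \ref{thm:bcl}(i) to obtain positive measure, use the nesting $\Psi_{n+1}\subset\Psi_n$ to deduce that the limsup set is essentially $T$-invariant, and then apply ergodicity of $T$ to upgrade to full measure. The only cosmetic difference is that the paper carries out the invariance step on the auxiliary sets $F_n$ (with $B_0\subset B$) from the proof of Theorem \ref{thm:bcl}(i), obtaining $F_{n+1}\subset T(F_n)$ and hence $F\subset T(F)$, whereas you work directly with $E_n=a_{t_n}\Psi_nBy_0$ and obtain the equivalent inclusion $T^{-1}A\subset A$; the arguments are interchangeable.
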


\subsection{Organization of the paper}
In Section \ref{sec:vol}, we introduce a convenient coordinate system
to describe the cusps $C(v,\psi)$ and show that Theorems
\ref{thm:main-qform} and \ref{th:2} are equivalent. Section
\ref{sec:inter} contains the proof of quasi-independence which is
crucial for Theorem \ref{thm:bcl}.  In Section \ref{sec:proof}, we
prove the theorems from the introduction, and in Section
\ref{sec:examples}, we give some examples for Theorem
\ref{thm:main-qform}.

\subsection{Acknowledgments}{\small We would like to thank Anish Ghosh
  for useful discussions.  N.S. would like to thank Caltech for
  hospitality where this work was started.  A.G. would like to thank
  Princeton University for hospitality during the academic year
  2007--2008 where part of this project was completed.  A.G. was
  supported by NSF grant 0654413 and RCUK Fellowship.}

\section{Description of the cusps}\label{sec:vol}

In this section, we use notation from Section \ref{sec:quad}, and we
will prove

\begin{theo}
  Let $\psi:(0,\infty)\to (0,\infty)$ be a measurable quasi-conformal
  function. Then for any $v\in \partial X$,
  \label{thm:vol-est}
  \[\vol(C(v,\psi))=\infty\quad\iff\quad
  \int_1^\infty t^{d-2}\psi(t)^{d-1}\,dt=\int_0^\infty
  (e^t\psi(e^t))^{d-1}\, dt=\infty.
  \]
\end{theo}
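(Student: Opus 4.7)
The plan is to exploit the homogeneity of $X$ under $G=\mathrm{O}(Q)$ to reduce to one convenient cusp direction, introduce adapted ``horospherical'' coordinates at that point, and compute the volume integral directly.

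Since $Q$ is nondegenerate and indefinite, after a linear change of coordinates I may assume
\[
Q(x_1,\ldots,x_{d+1}) = 2x_1 x_{d+1} + Q_0(x_2,\ldots,x_d),
\]
where $Q_0$ is a nondegenerate quadratic form in $d-1$ variables, and then $v_0:=e_1\in\partial X$. By Witt's theorem $G$ acts transitively on $\partial X$, and $\vol$ is $G$-invariant; since the right-hand integrals do not depend on $v$ at all, it suffices to prove the equivalence for $v=v_0$. I next parametrize the half of $X$ lying above the ray $\R_{>0}v_0$ by
\[
\Phi(r,y)=\bigl(r,\,y,\,s(r,y)\bigr),\qquad s(r,y):=\frac{m-Q_0(y)}{2r},\quad (r,y)\in(0,\infty)\times\R^{d-1}.
\]
Using $\partial Q/\partial x_{d+1}=2r$, the Leray residue formula yields
\[
d\vol \;=\; c\,\frac{dr\,dy}{r}
\]
on this chart for a positive constant $c$. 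A direct manipulation gives
\[
\|\pi(\Phi(r,y))-v_0\|^2 \;=\; 2\bigl(1-r/\|\Phi\|\bigr) \;=\; \frac{2(\|y\|^2+s^2)}{\|\Phi\|(\|\Phi\|+r)},
\]
so on the region $\|y\|^2+s^2\le\tfrac12 r^2$ one has $\|\Phi(r,y)\|\asymp r$ and $\|\pi(\Phi(r,y))-v_0\|\asymp\sqrt{\|y\|^2+s^2}/r$. Since $|s(r,y)|\le C/r$ whenever $\|y\|$ is bounded (with $C$ depending only on $Q_0,m$), this sharpens to $\|\pi(\Phi(r,y))-v_0\|\asymp \max(\|y\|,\,C/r)/r$.

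Now I estimate $\vol(C(v_0,\psi))$. If $\psi(r)\not\to 0$, quasi-conformality forces $\psi$ to be bounded below by a positive constant on arbitrarily large intervals; for $d\ge 3$ both quantities in the claimed equivalence are then trivially infinite. Assuming $\psi(r)\to 0$, the tail of $C(v_0,\psi)$ (where $\|x\|$ is large) automatically satisfies $\pi(x)\to v_0$, hence $x_1>0$ and $\|y\|/r\to 0$, so this tail sits inside the chart $\Phi$. Quasi-conformality gives $\psi(\|\Phi(r,y)\|)\asymp\psi(r)$ on that range, so the defining inequality $\|\pi(x)-v_0\|<\psi(\|x\|)$ is equivalent, up to multiplicative constants, to
\[
r^2\psi(r)\gtrsim C \quad\text{and}\quad \|y\|\lesssim r\psi(r).
\]
Integrating over the chart,
\[
\vol(C(v_0,\psi)) \;\asymp\; \int_{\{r\ge 1\,:\,r^2\psi(r)\gtrsim C\}}\frac{(r\psi(r))^{d-1}}{r}\,dr + O(1).
\]
On the complementary range one has $\psi(r)<C/r^2$, contributing at most $\int r^{-d}dr<\infty$ to $\int_1^\infty r^{d-2}\psi(r)^{d-1}\,dr$. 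Hence $\vol(C(v_0,\psi))$ and $\int_1^\infty t^{d-2}\psi(t)^{d-1}\,dt$ are simultaneously finite or infinite, and the substitution $t=e^u$ identifies the latter integral with $\int_0^\infty(e^u\psi(e^u))^{d-1}\,du$.

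The delicate point I expect to work hardest on is the error analysis that lets me replace the exact cusp condition $\|\pi(x)-v_0\|<\psi(\|x\|)$ with the comparable box condition $\|y\|\lesssim r\psi(r)$: it requires both the geometric comparison above (which degenerates when $\|y\|$ is dwarfed by $|s|\asymp 1/r$, producing the harmless cut-off $r^2\psi(r)\gtrsim C$) and a careful use of quasi-conformality so that the variation of $\psi$ over the fibres of the chart is absorbed into the implied constants.
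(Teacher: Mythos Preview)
Your proposal is correct and follows essentially the same route as the paper: reduce to a single boundary direction using $G$-invariance of $\vol$ together with quasi-conformality (the paper isolates this as the lemma $g(C(v,\psi))\subset C(\pi(gv),\kappa\psi)$), parametrize in horospherical coordinates $(r,y)$ (the paper's $(e^t,\vs)$, with the identical volume form $dr\,dy/r=dt\,d\vs$), and sandwich the cusp between balls $\{\|y\|\lesssim r\psi(r)\}$ off the negligible set $\{r^2\psi(r)\lesssim|m|\}$. The one organizational difference is that the paper first reduces to $r\psi(r)\to 0$ via an auxiliary lemma, which makes $\|y\|$ genuinely bounded and cleans up the $|s|$-estimate you flagged as delicate; in your version that step is unnecessary provided you replace the claim ``$|s|\le C/r$ when $\|y\|$ is bounded'' by the observation $|Q_0(y)|/r\lesssim r\psi(r)^2=o(r\psi(r))$ once $\psi(r)\to 0$.
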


Recall that we are assuming that $\|\cdot\|$ is the standard Euclidean
norm. In this case, $\pi(x)=x/\norm{x}$.

We choose a basis $\{f_1,\dots,f_{d+1}\}$ of $\R^{d+1}$ such that
\[
Q(x_1f_1+\dots+x_{d+1}f_{d+1})=2x_1x_{d+1}+x_2^2+\cdots+x_p^2-x_{p+1}^2-\cdots-x_{d}^2,
\quad (x_i\in\R).
\]
As we have noted before, the variety $X=\{Q=m\}$, where
$m\in\Q\setminus\{0\}$, is a homogeneous space of $G=\hbox{O}(Q)\simeq
\hbox{O}(p,q)$.

\begin{lemma} \label{lema:qc} Given a compact set $\cK\subset G$,
  there exists a constant $\kappa>1$ such that for any $v\in \partial
  X$ and $g\in \cK$,
  \[
  g(C(v,\psi))\subset C(\pi(gv),\kappa\psi).
  \]
\end{lemma}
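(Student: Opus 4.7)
The plan is to decompose the assertion into two independent estimates: one controlling how far $\pi(gx)$ is from $\pi(gv)$ in terms of $\|\pi(x)-v\|$, and one comparing $\psi(\|x\|)$ with $\psi(\|gx\|)$. Both estimates become uniform in $g$ by compactness of $\cK$.

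First, I would observe the identity $\pi(gx) = g\cdot \pi(x)$, which holds because the $G$-action on $S^d$ was defined precisely by $g\cdot \pi(w) = \pi(gw)$. Consequently,
\[
\|\pi(gx) - \pi(gv)\| = \|g\cdot \pi(x) - g\cdot v\|.
\]
The map $w\mapsto g\cdot w$ is a smooth self-diffeomorphism of the compact sphere $S^d$, so it is Lipschitz with some constant $L(g)$ depending continuously on $g\in G$. Setting $L \dfn \sup_{g\in\cK} L(g) < \infty$, we obtain, for every $x\in C(v,\psi)$,
\[
\|\pi(gx) - \pi(gv)\| \;\leq\; L\,\|\pi(x)-v\| \;<\; L\,\psi(\|x\|).
\]

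Second, $g$ acts by an invertible linear map on $\R^{d+1}$, so the ratio $\|gx\|/\|x\|$ is bounded above and below by positive constants uniformly for $g\in \cK$: choose $M\dfn \sup_{g\in\cK}\max(\|g\|_{\mathrm{op}},\|g\inv\|_{\mathrm{op}})$ so that $M\inv\|x\|\leq \|gx\| \leq M\|x\|$. Pick $k\in\N$ with $M\leq 2^k$. Writing $\|x\| = h\,\|gx\|$ with $h\in[2^{-k},2^k]$ and iterating the quasi-conformality property $\psi(ht)\leq c\,\psi(t)$ for $h\in[1/2,2]$ exactly $k$ times, we obtain
\[
\psi(\|x\|)\;\leq\;c^k\,\psi(\|gx\|).
\]

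Combining the two estimates yields $\|\pi(gx)-\pi(gv)\| < L c^k\,\psi(\|gx\|)$, so $gx\in C(\pi(gv),\kappa\psi)$ with $\kappa \dfn L c^k$, a constant depending only on $\cK$ and on the quasi-conformality constant $c$. Both steps are essentially routine; the only bookkeeping point requiring care is that the Lipschitz constant for $g$-action on $S^d$ and the operator-norm bounds on $g$ need to be taken as a joint supremum over $\cK$, which is legitimate because $\cK$ is compact and the relevant quantities depend continuously on $g$. I do not foresee a substantive obstacle.
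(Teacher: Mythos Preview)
Your proof is correct and follows essentially the same two-step structure as the paper's: a Lipschitz bound for the $G$-action on $S^d$ (the paper obtains it via the explicit inequality $\|w_1/\|w_1\|-w_2/\|w_2\|\|\le \frac{2}{\|w_2\|}\|w_1-w_2\|$ applied to $w_1=g\pi(x)$, $w_2=gv$, while you invoke smoothness and compactness), followed by quasi-conformality to pass from $\psi(\|x\|)$ to $\psi(\|gx\|)$.
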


\begin{proof}
  Let $x\in C(v,\psi)$. Then
$$
\|\pi(x)-v\|<\psi(\|x\|).
$$
We need to prove that $gx\in C(\pi(gv),\kappa \psi)$ for some
$\kappa>0$, i.e.,
\begin{equation}\label{eq:cone}
  \|\pi(gx)-\pi(gv)\|<\kappa\psi(\|gx\|).
\end{equation}
We have
\begin{align*}
  \|\pi(gx)-\pi(gv)\|=\left\|\frac{g\pi(x)}{\|g\pi(x)\|}-\frac{gv}{\|gv\|}\right\|,
\end{align*}
and using the inequality
$$
\left\|\frac{w_1}{\|w_1\|}-\frac{w_2}{\|w_2\|}\right\|\le
\frac{2}{\|w_2\|}\|w_1-w_2\|,
$$
we deduce that for some $\kappa_1=\kappa_1(\mathcal{K})>0$,
\begin{align*}
  \|\pi(gx)-\pi(gv)\| &\le \frac{2}{\|gv\|}\|g\pi(x)-gv \|\\
  &\le \frac{2\|g\|}{\|gv\|}\|\pi(x)-v \| <\kappa_1\psi(\|x\|).
\end{align*}
Since $\psi$ is quasi-conformal, there exists
$\kappa_2=\kappa_2(\mathcal{K})>0$ such that
$$
\psi(\|x\|)\le \kappa_2 \psi(\|gx\|).
$$
This implies (\ref{eq:cone}).
\end{proof}

\begin{lemma} \label{lema:phi} Let $\phi:\R\to (0,\infty)$ be a
  measurable function such that for some $c>1$,
  \begin{equation}\label{eq:36}
    c\inv\phi(x)\leq \phi(x+h)\leq c \phi(x), 
    \quad  x\in\R,\ h\in[-1,1]. 
  \end{equation}

  \begin{enumerate}
  \item[(i)] If $\int_0^\infty \phi(t)\,dt<\infty$, then $\phi(t)\to
    0$ as $t\to\infty$.

  \item[(ii)] If $\int_0^\infty\phi(t)\,dt=\infty$, then there exists
    a function $\phi_1:(0,\infty)\to (0,1]$ such that $\phi_1$
    satisfies (\ref{eq:36}) with a possibly different constant $c>0$,
    $\phi_1\le \phi$, $\phi_1(t)\to 0$ as $t\to\infty$,
    $\frac{\phi(t)}{\phi_1(t)}\to \infty$ as $t\to\infty$, and
    $\int_0^\infty\phi_1(t)\,dt=\infty$.
  \end{enumerate}
\end{lemma}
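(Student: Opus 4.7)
The plan is to handle (i) by a one-line integration argument and (ii) by an explicit construction, with the bulk of the work concentrated in verifying the quasi-conformality of the constructed function.

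For (i), I would integrate the lower bound $\phi(x+h) \geq c\inv\phi(x)$ (valid for $h\in[0,1]$) over $h$ to obtain $\int_x^{x+1}\phi(s)\,ds \geq c\inv\phi(x)$. Under the assumption $\int_0^\infty\phi<\infty$, the tails $\int_x^{x+1}\phi$ tend to $0$ as $x\to\infty$, which forces $\phi(x)\to 0$.

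For (ii), I set $F(t) := e+\int_0^t \phi(s)\,ds$ and propose
\[
\phi_1(t) = \min\!\left(\frac{\phi(t)}{F(t)\log F(t)},\, 1\right),\qquad t>0.
\]
The crucial preliminary observation is that quasi-conformality of $\phi$ bounds $\phi/F$ on $[1,\infty)$, since $F(t)\geq \int_{t-1}^t\phi \geq c\inv\phi(t)$. From this, $F(t+h)/F(t)$ stays in a bounded range for $|h|\le 1$, so $F$ is quasi-conformal; because $F\geq e$, so is $\log F$; and hence so are the ratio $\phi/(F\log F)$ and its minimum with $1$.

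The remaining checks are then routine. The bound $\phi_1 \leq 1$ is by construction. For $\phi_1\leq\phi$: either $\phi_1 = \phi/(F\log F)\leq \phi$ (as $F\log F\geq e\geq 1$), or $\phi_1 = 1$, which forces $\phi \geq F\log F\geq e>1$. For $t$ beyond some threshold the first argument of the minimum drops below $1$, so $\phi_1 = \phi/(F\log F)\leq c/\log F(t)\to 0$ while $\phi/\phi_1 = F(t)\log F(t)\to\infty$. Divergence of the integral follows from the substitution $u=F(t)$, $du=\phi(t)\,dt$:
\[
\int_0^\infty \phi_1(t)\,dt\;\geq\;\int_T^\infty \frac{\phi(t)}{F(t)\log F(t)}\,dt \;=\;\int_{F(T)}^\infty \frac{du}{u\log u}\;=\;\infty.
\]
The main obstacle I anticipate is the quasi-conformality verification for $\phi_1$; once the boundedness of $\phi/F$ is in hand, this reduces to the stability of the quasi-conformal class under products, quotients by functions bounded below, and pointwise minimum.
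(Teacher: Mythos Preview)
Your proof is correct. Part (i) is essentially identical to the paper's argument (the paper phrases it as a contrapositive, extracting a sequence $t_i$ with $\phi(t_i)\ge\delta$ and integrating over $[t_i-1,t_i+1]$, but the content is the same tail-integral bound).

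For part (ii) your construction differs from the paper's, though the underlying idea---divide $\phi$ by its own running integral---is the same. The paper first truncates to $\phi_2=\min(\phi,1)$, then chooses $T_i$ with $\int_0^{T_i}\phi_2=i$, sets $\rho(t)=1/i$ on $(T_{i-1},T_i]$, and takes $\phi_1=\rho\,\phi_2$; the truncation forces $T_i-T_{i-1}\ge 1$, which is precisely what makes the step function $\rho$ quasi-conformal, and one gets $\int\phi_1=\sum 1/i=\infty$ directly. Your version is the continuous analogue: you skip the preliminary truncation, use $F=e+\int_0^t\phi$, and insert the extra $\log F$ in the denominator so that after the substitution $u=F(t)$ the tail becomes $\int du/(u\log u)=\infty$. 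The paper's route avoids verifying that $\log F$ inherits quasi-conformality from $F$ (your one nontrivial check, which goes through because $\log F\ge 1$ and $|\log F(t+h)-\log F(t)|$ is bounded); yours avoids the discretization and yields a closed-form $\phi_1$. The two arguments are of comparable length and difficulty.
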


We note that the function $\phi(t):=e^t\psi(e^t)$ satisfies the
condition \eqref{eq:36} for some $c>1$.

\begin{proof}
  Suppose there exists $\delta>0$ and a sequence $t_i\to\infty$ such
  that $\phi(t_i)\geq\delta$. We may assume that $t_{i+1}-t_i\geq
  2$. By \eqref{eq:36}, $\phi(t)\geq c^{-1}\delta$ for all
  $t\in[t_i-1,t_i+1]$. Therefore $\int_0^\infty
  \phi(t)\,dt=\infty$. In particular, this proves (i).

  Now we prove (ii). Let $\phi_2(t)=\min(\phi(t),1)$.  Then it follows
  from (\ref{eq:36}) that $\int_0^\infty \phi_2(t)\,dt=\infty$, and
  $\phi_2$ satisfies (\ref{eq:36}) with a different constant $c>0$.
  Let $T_0=0$ and $T_i>0$ be such that $\int_0^{T_i}\phi_2(t)\,dt=i$
  (it follows from (\ref{eq:36}) that the function $T\mapsto \int_0^T
  \phi_2(t)dt$ is continuous).  Then $T_i-T_{i-1}\geq 1$ for all
  $i\in\N$. We define $\rho(t)=1/i$ for $t\in (T_{i-1},T_i]$. Then
  $(1/2)\rho(t)\leq \rho(t+h)\leq 2\rho(t)$ for all $h\in[-1,1]$ and
  $t> 1$, and it is easy to check that $\phi_1(t):=\rho(t)\phi_2(t)$
  satisfies the conditions of (ii).
\end{proof}

For $v\in \R^{d+1}$, we write $v=v_1+v_2$, where $v_1\in\R f_1$ and
$v_2\in\Span\{f_2,\dots,f_{d+1}\}$. Let $p(v):=\norm{v_1}$ and $\bar
f_1=\pi(f_1)\in \partial X$. For $T>1$, we define
\begin{align*}
  D_T(\bar f_1,\psi)&=\{x\in X: \norm{x/p(x)-\bar f_1}\leq
  \psi(p(x));\
  \norm{p(x)}\geq T\},\\
  C_T(\bar f_1,\psi)&=\{x\in X: \norm{x/\norm{x}-\bar f_1}\leq
  \psi(\norm{x});\ \norm{x}\geq T\}.
\end{align*}

\begin{lemma}
  \label{prop:CD}
  Let $\psi:(0,\infty)\to (0,\infty)$ be quasi-conformal function such
  that $\psi(t)\to 0$ as $t\to\infty$.  Then there exist $c>1$ and
  $T_0>1$, depending on $\psi$, such that for all $T\geq T_0$,
  \begin{align}
    \label{eq:37}
    D_T(\bar f_1,\psi)&\subset C_{T/2}(\bar f_1,c\psi),\\
    \label{eq:37b}
    C_T(\bar f_1,\psi)&\subset D_{T/2}(\bar f_1, c\psi).
  \end{align}
\end{lemma}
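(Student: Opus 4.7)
The plan is to show that when $x\in X$ lies in either cusp, the two scales $p(x)$ and $\norm{x}$ are comparable with ratio $1+O(\psi)$, so passing between the two descriptions costs only a multiplicative constant that can be absorbed by quasi-conformality of $\psi$.

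Set up coordinates by writing $x = af_1 + w$ with $w\in W := \Span\{f_2,\dots,f_{d+1}\}$, so that $p(x) = |a|\cdot\norm{f_1}$. The oblique projection $P\colon x\mapsto af_1$ along $W$ is a bounded linear map, and the triangle inequality gives $\abs{\,\norm{x} - p(x)\,} \leq \norm{w}$.

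For \eqref{eq:37}: if $x\in D_T(\bar f_1,\psi)$, the expansion
$$
x/p(x) - \bar f_1 = (\mathrm{sgn}(a)-1)\bar f_1 + \frac{w}{|a|\,\norm{f_1}}
$$
together with $\psi(p(x))\to 0$ forces $a>0$ and $\norm{w}\leq p(x)\norm{f_1}\psi(p(x))$ once $T_0$ is large enough. Hence $\abs{\,\norm{x}/p(x)-1\,}=O(\psi(p(x)))$, so after further enlarging $T_0$ one has $\norm{x}/p(x)\in[1/2,2]$ and in particular $\norm{x}\geq p(x)/2\geq T/2$. Writing $x/\norm{x} = \lambda\cdot x/p(x)$ with $\lambda=p(x)/\norm{x}$ and applying the elementary bound $\norm{\lambda u - v}\leq \lambda\norm{u-v}+|1-\lambda|\norm{v}$ with $v=\bar f_1$ (which has Euclidean norm $1$), I get $\norm{x/\norm{x}-\bar f_1}=O(\psi(p(x)))$. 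Finally, quasi-conformality applied to the ratio $\norm{x}/p(x)\in[1/2,2]$ converts this to $\leq c\psi(\norm{x})$, proving \eqref{eq:37}.

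The inclusion \eqref{eq:37b} is symmetric. Applying $P$ to $\norm{x/\norm{x}-\bar f_1}\leq\psi(\norm{x})$, and using $P\bar f_1=\bar f_1$, one computes
$$
\norm{P(x/\norm{x})-\bar f_1} = \abs{\,p(x)/\norm{x}-1\,}\cdot\norm{f_1}\leq \norm{P}\,\psi(\norm{x}),
$$
forcing $a>0$ and $p(x)\geq\norm{x}/2\geq T/2$ for $T_0$ large. Writing now $x/p(x)=\mu\cdot x/\norm{x}$ with $\mu=\norm{x}/p(x)\in[1/2,2]$ and repeating the triangle-inequality estimate, one bounds $\norm{x/p(x)-\bar f_1}$ by a constant multiple of $\psi(\norm{x})$; quasi-conformality replaces $\psi(\norm{x})$ by $c\psi(p(x))$, giving \eqref{eq:37b}.

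The only real obstacle is constant-tracking: because the basis $\{f_i\}$ is chosen to diagonalize $Q$ and need not be Euclidean-orthogonal, the oblique decomposition $x = af_1+w$ introduces basis-dependent constants (through $\norm{P}$ and $\norm{f_1}$). This is harmless because $\psi(t)\to 0$ lets us choose $T_0$ large enough that all ratios $\norm{x}/p(x)$ fall safely inside $[1/2,2]$, and the finitely many resulting multiplicative factors are then absorbed into the single constant $c$.
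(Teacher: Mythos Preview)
Your argument is correct and follows essentially the same route as the paper: decompose $x=x_1+x_2$ along $\mathbb{R}f_1\oplus W$, use the cusp condition to force $\norm{x}/p(x)\in[1/2,2]$, pass between $x/\norm{x}$ and $x/p(x)$ by a triangle-inequality estimate, and absorb the constants via quasi-conformality. Your treatment of \eqref{eq:37b} via the bounded oblique projection $P$ (needed because the $Q$-diagonalizing basis $\{f_i\}$ is not Euclidean-orthogonal) actually supplies a detail the paper leaves implicit when it says ``Similarly, one proves \eqref{eq:37b}''; apart from a harmless stray factor of $\norm{f_1}$ in two displayed constants, the proof is fine.
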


\begin{proof} 
  Let $c>1$ be such that
$$
\psi(ht)\leq (c/2)\psi(t)\quad\hbox{for all $h\in [1/2,2]$.}
$$
For $x=x_1+x_2\in D_T(\bar f_1,\psi)$, we have
$\norm{x/\norm{x_1}-\bar f_1}\leq \psi(\norm{x_1})$. Since
$x_1/\norm{x_1}=\bar f_1$, we have
\[
\norm{x_2}\leq \norm{x_1}\psi(\norm{x_1}),
\]
and
\[
\begin{split}
  \norm{x/\norm{x}-x/\|x_1\|} &\leq
  \bigl\lvert\norm{x_1}-\norm{x}\bigr\rvert/\norm{x_1}\leq
  \norm{x_2}/\norm{x_1}\leq \psi(\norm{x_1}).
\end{split}
\]
Therefore
\begin{equation}
  \label{eq:pi-p}
  \norm{x/\|x\|-\bar f_1}\leq \norm{x/\|x\|-x/\|x_1\|}+\norm{x/\|x_1\|-\bar
    f_1}\leq 2\psi(\norm{x_1}).
\end{equation}
We choose $T_0>0$ such that $\psi(T)<1/2$ for all $T\geq T_0$. Then
$\norm{x_2}/\norm{x_1}<1/2$. Therefore, $\norm{x_1}/\norm{x}\in
(1/2,2)$ and $\psi(\norm{x_1})\leq (c/2)\psi(\norm{x})$. By
\eqref{eq:pi-p}, $x\in C_{T/2}(\bar f_1,\psi)$ and \eqref{eq:37}
follows. Similarly, one proves \eqref{eq:37b}.
\end{proof}

\begin{proof}[Proof of Theorem \ref{thm:vol-est}]
  Applying Lemma \ref{lema:phi} to $\phi(t)=(e^t\psi(e^t))^{d-1}$, we
  may assume that $e^t\psi(e^t)\to 0$ as $t\to\infty$.

  Since the action of $g$ on $X$ preserves the $G$-invariant measure
  on $X$, we have that $\vol(g\,C(v,\psi))=\vol(C(v,\psi))$. Therefore
  due to Lemma~\ref{lema:qc}, in order to prove
  Theorem~\ref{thm:vol-est}, it is sufficient to prove it for a chosen
  $v\in \partial X$. Here we choose $v=\pi(f_1)=\bar f_1$.

  Due to Lemma~\ref{prop:CD}, it is enough to prove that for
  sufficiently large $T$,
  \begin{equation}
    \label{eq:D}
    \vol(D_T(\bar f_1,\psi))=\infty\quad\Longleftrightarrow\quad \int_{0}^\infty
    (e^t\psi(e^t))^{d-1}dt=\infty.
  \end{equation}

  Let $w_0=f_1+(m/2)f_{d+1}\in X$. In what follows we will write the
  matrices of the linear transformation on $\R^{d+1}$ with respect to
  the basis $\{f_1,\dots,f_{d+1}\}$. Let
  \begin{equation}\label{eq:at}
    a_t=\hbox{diag}(e^t,1,\ldots,1,
    e^{-t}).
  \end{equation}
  For $\vs=(\vs_1,\vs_2)\in\R^{d-1}$, where
  $\vs_1=(s_2,\cdots,s_p)\in\R^{p-1}$ and
  $\vs_2=(s_{p+1},\dots,s_{d})\in\R^{d-p}$, let
  \begin{equation}\label{eq:us}
    u(\vs)=\left(
      \begin{matrix}
        1            &   & & \\
        \trn{\vs_1} & I_{p-1}&          & \\
        \trn{\vs_2} &    & I_{d-p}      & \\
        \frac{1}{2}(-\norm{\vs_1}^2+\norm{\vs_2}^2) & -\vs_1 & \vs_2 &
        1
      \end{matrix}
    \right).
  \end{equation}
  Then
  \begin{equation}
    \label{eq:aux}
    a_{t}u(\vs)w_0
    =e^tf_1+(s_2f_2+\dots+s_{d}f_{d})+
    (e^{-t}(m-\norm{\vs_1}^2+\norm{\vs_2}^2)/2)f_{d+1}.
  \end{equation}

  We observe that every $x\in X$ such that $p(x)\ne 0$ can be written
  in the form $x=a_tu(\vs) w_0$. This implies that the set $D_T(\bar
  f_1,\psi)$ consists of $x=a_tu(\vs) w_0$ such that
  \[
  (\norm{f_1} e^{t})\inv\norm{(s_2f_2+\dots+s_{d}f_{d})+
    (e^{-t}(m-\norm{\vs_1}^2+\norm{\vs_2}^2)/2)f_{d+1}} \le
  \psi(e^{t}\norm{f_1})
  \]
  and $\|f_1\|e^t\ge T$.  There exists a constant $c_2>1$ such that
  for any $(s_2,\dots,s_{d+1})\in\R^{d}$, we have
  \[
  \sup(\norm{(s_2,\dots,s_{d})},|s_{d+1}|)/\norm{f_2s_2+\dots+s_df_d+s_{d+1}f_{d+1}}\in(c_2\inv,c_2).
  \]
  Since $\psi$ is quasi-conformal, there exists $c_3>1$ such that
  \[
  \psi(e^t \norm{f_1})/\psi(e^t)\in (c_3\inv,c_3),\quad \forall t>0.
  \]
  Let
  \begin{equation}
    \label{eq:Ut}
    \cU_t(\psi)=
    \left\{\vs\in\R^{d-1}:
      \begin{array}{l}
        \abs{m-\norm{\vs_1}^2+\norm{\vs_2}^2}/2\le e^{2t}\psi(e^t),\\
        \|\vs\|\le e^t\psi(e^t)
      \end{array}
    \right\}.
  \end{equation}
  Then there exists $c>1$ such that
  \begin{align}
    \label{eq:42}
    \bigcup_{t\ge t_0} a_tu(\cU_t(c^{-1}\psi))w_0\subset D_T(\bar
    f_1,\psi)\subset \bigcup_{t\ge t_0} a_tu(\cU_t(c\psi))w_0
  \end{align}
  where $t_0=\log(T/\|f_1\|)$. Since the volume form on $X$ with
  respect to the $(t,\vs)$-coordinates is given by $dtd\vs$, we have
  \[
  \vol_X\left(\bigcup_{t\ge t_0}a_tu(\cU_t(\psi))w_0\right)=\infty
  \iff \int_{t_0}^\infty\int_{\vs\in \cU_t(\psi)}dtd\vs =\infty.
  \]
  Hence to prove \eqref{eq:D}, it suffices to prove that
  \begin{align}
    \label{eq:40}
    \int_0^\infty (e^t\psi(e^t))^{d-1}\, dt=\infty \iff \int_0^\infty
    \vol_{\R^{d-1}}(\cU_t(\psi))\,dt=\infty.
  \end{align}
  Let
  \begin{equation}
    \label{eq:tilda-Ut}
    \tilde \cU_t(\psi)=\{\vs\in\R^{d-1}: \norm{\vs}\leq e^t\psi(e^t)\}.
  \end{equation}
  Let $t_1> 0$ be such that $\psi(e^t)\leq 1$ for all $t\geq t_1$ and
  \begin{align}
    \label{eq:41}
    \mathcal{T}:=\{t>0: e^{2t}\psi(e^t)\leq \abs{m}\}.
  \end{align}
  For $t>t_1$, $t\notin\mathcal{T}$, and $\vs=(\vs_1,\vs_2)\in \tilde
  \cU_t(\psi)$,
  \begin{equation}\label{eq:eq}
    \begin{array}{lcll}
      \abs{m-\norm{\vs_1}^2+\norm{\vs_2}^2} \leq \abs{m}+\norm{\vs}^2\leq \abs{m}+e^{2t}\psi(e^t)^2
      \leq 2e^{2t}\psi(e^t). 
    \end{array}
  \end{equation}
  Hence, for such $t$,
  \[
  \cU_t(\psi)=\tilde \cU_t(\psi)\quad \text{ and }\quad
  \vol_{\mathbb{R}^{d-1}}(
  \cU_t(\psi))=\omega_{d-1}(e^t\psi(e^t))^{d-1}.
  \]
  where $\omega_{d-1}>0$ is the volume of the unit ball in $\R^{d-1}$.
  On the other hand,
  \begin{align}
    \label{eq:Omega1}
    \begin{split}
      \int_{\mathcal{T}} \vol_{\mathbb{R}^{d-1}}(\cU_t(\psi))\,dt&\leq
      \int_{\mathcal{T}}
      \vol_{\mathbb{R}^{d-1}}(\tilde\cU_t(\psi))\,dt\leq \omega_{d-1}\int_{\mathcal{T}} (e^{t}\psi(e^t))^{d-1}\,dt\\
      &\leq \omega_{d-1} \abs{m}^{d-1}\int_{\mathcal{T}}
      e^{-(d-1)t}\,dt<\infty.
    \end{split}
  \end{align}
  This implies \eqref{eq:40} and completes the proof of
  Theorem~\ref{thm:vol-est}.
\end{proof}

\section{Volume estimate for intersections}\label{sec:inter}

Let $G$ be a real algebraic group, and $\sigma$ is an involution of
$G$.  Let $A=\{a_t\}$ be a one-parameter subgroup of $G$ such that
$\sigma(a_t)=a_{-t}$.  We use notation:
\begin{align*}
  &H=\{g\in G:\sigma(h)=h\}, & U^+=\{g\in G:a_{-t} g a_t\to_{t\to\infty} e\},\\
  &Z=Z_G(A), & U^-=\{g\in G:a_tga_{-t}\to_{t\to\infty} e\}.
\end{align*}
Note that $\sigma(U^-)=U^+$, $\sigma(Z)=Z$, and $\sigma(U^+)=U^-$.

We fix a (right) invariant Riemannian metric on $G$. For a subgroup
$S$ of $G$, we set $S_\epsilon=\{s\in S:\, d(s,e)<\epsilon\}$. (It
will be convenient to use that $S_\epsilon^{-1}=S_\epsilon$ and
$S_{\epsilon_1}S_{\epsilon_2}\subset S_{\epsilon_1+\epsilon_2}$.)

The following theorem is the main result of this section:

\begin{theo} \label{thm:vol-B_t} There exist constants $r_0,t_0>0$
  such that for any measurable subsets $\Psi_i\subset U^-_{r_0}$ and
  $g\in G$, setting $D_i:=\Psi_iZ_{r_0}H_{r_0}g\Gamma$, $i=1,2$, we
  have
  \[
  \vol_{G/\Gamma}(D_1\cap a_t D_2)\leq
  C\vol_{U^{-}}(\Psi_1)\vol_{U^-}(\Psi_2), \quad \forall t\geq t_0.
  \]
  for some $C=C(g)>0$.
\end{theo}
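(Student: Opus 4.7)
My plan is to work in local coordinates on $G$ near $e$ coming from the product decomposition $G\approx U^-\cdot Z\cdot H$ (which is available because $\sigma(\Lie(U^-))=\Lie(U^+)$ forces $\Lie(H)\cap\Lie(U^-)=0$) and to exploit the transversality advertised in the introduction: $\Ad(a_t)$ contracts $\Lie(U^-)$ and expands the non-$Z$ part of $\Lie(H)$ into $\Lie(U^+)$, so when unfolded to $G$ each piece $a_t\tilde D_2\gamma$ becomes a tube very thin in the $U^-$-direction. Its intersection with $\tilde D_1=\Psi_1Z_{r_0}H_{r_0}g$ is then controlled by the volume-balance between the number of such tubes and the thinness of each.

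First I would choose $r_0>0$ small enough that $U^-_{r_0}\times Z_{r_0}\times H_{r_0}\to G$ is a local diffeomorphism onto its image with bounded Jacobian and that $\pi:G\to G/\Gamma$ is injective on $\tilde D_i:=\Psi_iZ_{r_0}H_{r_0}g$ (possible below the injectivity radius at $g\Gamma$, which is where $C=C(g)$ enters). Then by unfolding,
\begin{equation*}
  \vol_{G/\Gamma}(D_1\cap a_tD_2)\le\sum_{\gamma\in\Gamma}\vol_G(\tilde D_1\cap a_t\tilde D_2\gamma),
\end{equation*}
and a diameter comparison restricts the sum to $\gamma$ with $\eta:=g\gamma g\inv\in Va_{-t}V$ for a bounded neighborhood $V$ of $e$ depending only on $r_0$.

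For such a $\gamma$, parameterize $x=u_1z_1h_1g\in\tilde D_1$ and decompose $h_1=\exp(X_1+\sigma(X_1))\exp(Y_1)$ with $X_1\in\Lie(U^-)$ and $Y_1\in\Lie(Z)\cap\Lie(H)$. The condition $x\in a_t\tilde D_2\gamma$ reads $a_{-t}u_1z_1h_1\eta\inv\in\Psi_2Z_{r_0}H_{r_0}$. Applying $\Ad(a_{-t})$ to $u_1$ and to the $U^-$-part of $h_1$ and reading off the $U^-$-coordinate in the local $U^-ZH$-chart, the constraint becomes
\begin{equation*}
  \log u_2=\Ad(a_{-t})(\log u_1+X_1)+\Lambda(z_1,Y_1,\eta)+R_t,
\end{equation*}
where $\Lambda$ has bounded image and $R_t=O(e^{-\mu t})$ collects subleading nonabelian corrections. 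For fixed $(u_1,z_1,Y_1)$ the map $X_1\mapsto u_2$ is a local diffeomorphism whose differential equals $\Ad(a_{-t})|_{\Lie(U^-)}$ up to a factor bounded above and away from zero uniformly in $t$; its Jacobian therefore has size $e^{\mu t}$ (where $\mu>0$ is the expansion exponent of $\Ad(a_{-t})$ on $\Lie(U^-)$) times a uniformly bounded factor. Switching to $u_2$ as integration variable and applying Fubini yields $\vol_G(\tilde D_1\cap a_t\tilde D_2\gamma)\le C_{r_0}e^{-\mu t}\vol_{U^-}(\Psi_1)\vol_{U^-}(\Psi_2)$.

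Summing over the contributing $\gamma$ with $N(t):=|\Gamma\cap g\inv Va_{-t}Vg|$, the total is $\le C_{r_0}N(t)e^{-\mu t}\vol_{U^-}(\Psi_1)\vol_{U^-}(\Psi_2)$. A standard lattice-point estimate gives $N(t)\le C(g)\vol_G(Va_{-t}V)$, and a direct Bruhat-coordinate computation shows $\vol_G(Va_{-t}V)\le C'(g)e^{\mu t}$ (the $\Ad(a_{-t})$-dilation of $V$ expands $\Lie(U^-)$ by the factor $e^{\mu t}$), so the two exponential factors cancel to yield the required bound $\le C(g)\vol_{U^-}(\Psi_1)\vol_{U^-}(\Psi_2)$ for $t\ge t_0$. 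The main obstacle is verifying the per-$\gamma$ Jacobian estimate in Bruhat coordinates: one must check that the large $e^{\mu t}$-scaling from $\Ad(a_{-t})$ acts identically on $\log u_1$ and on the $\Lie(U^-)$-part of $h_1$, so that after pulling out this factor the residual map $X_1\mapsto u_2$ is uniformly invertible with only bounded corrections coming from the nonabelian group law and from $\eta$.
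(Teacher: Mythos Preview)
Your overall architecture matches the paper's: unfold to $\sum_\gamma \vol_G(\tilde D_1 \cap a_t\tilde D_2\gamma)$, bound the number of contributing $\gamma$ by $\asymp\det(\Ad(a_t)|_{\mathfrak u^+})$ via a lattice-point count in $Va_{-t}V$, bound each summand by $\asymp\det(\Ad(a_t)|_{\mathfrak u^+})^{-1}\vol_{U^-}(\Psi_1)\vol_{U^-}(\Psi_2)$, and let the exponentials cancel. The gap is in the per-$\gamma$ estimate.

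First, the product map $U^-_{r_0}\times Z_{r_0}\times H_{r_0}\to G$ is \emph{not} a local diffeomorphism: $Z\cap H$ is typically positive-dimensional (in the orthogonal-group case it is $\mathrm{O}(p-1,q-1)$), so your chart is redundant and there is no ``bounded Jacobian'' to speak of. The paper repairs this by working with the genuine decomposition $\mathfrak g=\mathfrak u^-\oplus(\mathfrak z\cap\mathfrak q)\oplus\mathfrak h$ and the transversal $B=\exp(\mathfrak z\cap\mathfrak q)$. Second, your displayed formula with ``$\Lambda$ of bounded image'' cannot hold as written: since $\eta\in Va_{-t}V$, writing $\eta=v_1a_{-t}v_2$ one finds an extra contribution $-\Ad(a_{-t})\bigl((\log v_2)^-\bigr)$ to $\log u_2$, which is of the same large scale as your leading term. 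This term is constant in $X_1$, so it does not ruin the Jacobian heuristic, but it shows your linearized identity and the size claim $R_t=O(e^{-\mu t})$ are not the right bookkeeping; the actual corrections are $O(r_0)\cdot\Ad(a_{-t})$, and one must also argue global injectivity of $X_1\mapsto u_2$ on the ball, not just a pointwise Jacobian bound.

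The paper sidesteps all of this with one preliminary move and one clean limit. If the intersection is nonempty, right-translate by any point in it; after stripping off the $H$-factors this reduces to bounding $\vol_G\bigl(\Psi_1Z_{r_0}H_{2r_0}\zeta\cap\alpha_t(\Psi_2Z_{r_0}H_{2r_0})\bigr)$ with a \emph{bounded} $\zeta$ (because $\alpha_t|_{U^-Z}$ is uniformly Lipschitz for $t\ge 0$). Then, rather than fixing variables and varying $X_1$, the paper builds a single map $\Phi_{\zeta,t}:G_r\to U^-\times Z\times U^-$ that records simultaneously the $\Psi_1$-coordinate of $g\zeta^{-1}$ in $U^-ZH$ and the $\alpha_t(\Psi_2)$-coordinate of $g$ in $U^-Z\alpha_t(H)$, and shows that $\Phi_{\zeta,t}\to\Phi$ in $C^1$ as $(\zeta,t)\to(e,\infty)$, where $D\Phi$ at $e$ has determinant $\pm 1$ (the key input is $\partial\vv/\partial u^+|_e=-\sigma$). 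This yields the uniform Jacobian lower bound and injectivity in one stroke, and the image lands in $\Psi_1\times Z_{\kappa r_0}\times\alpha_t(\Psi_2)$, giving exactly the factor $\vol_{U^-}(\alpha_t(\Psi_2))=\det(\Ad(a_t)|_{\mathfrak u^+})^{-1}\vol_{U^-}(\Psi_2)$ you want.
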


The proof of Theorem \ref{thm:vol-B_t} consists of two main steps: in
Proposition \ref{thm:inter-vol}, we estimate the volumes of the
intersections of lifts of $D_1$ and $a_tD_2$ in $G$, and using
Proposition \ref{p:lifts} with Lemma \ref{lema:UZaV}, we estimate the
number of lifts which intersect.

We start the proof by introducing convenient coordinate systems in
$G$.  Let $\la{g}$, $\la{a}$, $\la{h}$, $\la{z}$, $\la{u}^+$,
$\la{u}^-$ denote the corresponding Lie algebras.  We have the
decomposition
$$
\la{g}=\la{h}\oplus\la{q}
$$
into $(+1)$- and $(-1)$-eigenspaces of $\sigma$.  Since $\Ad(a_t)$ is
skew-symmetric with respect to the form $\langle X,\sigma(X)\rangle$,
$X\in\la{g}$, it follows that $\Ad(a_t)$ is diagonalizable and we have
the decomposition:
\begin{align*}
  \la{g}=\la{u}^-\oplus\la{z}\oplus \la{u}^+.
\end{align*}
Hence, the product map $U^-\times Z\times U^+\to G$ is a
diffeomorphism in a neighborhood of $e$, and there exist $r_0>0$ and
analytic maps $\vu^-:G_{r_0}\to U^-$, $\vz:G_{r_0}\to Z$,
$\vu^+:G_{r_0}\to U^+$ such that every element $g\in G_{r_0}$ can be
uniquely written as
\begin{equation}\label{eq:uzu}
  g=\vu^-(g)\vz(g)\vu^+(g).
\end{equation}

For every $x\in \la{u}^+$, we have $x=-\sigma(x)+(x+\sigma(x))$ where
$\sigma(x)\in\la{u}^-$ and $x+\sigma(x)\in\la{h}$. Hence, we also have
the decomposition:
\begin{equation}\label{eq:dec0}
  \la{g}=\la{u}^-\oplus (\la{z}+\la{h})
\end{equation}
and the decomposition:
\begin{align*} 
  \la{g}=\la{u}^-\oplus\la{z}_{\la{q}}(\la{a}) \oplus \la{h}.
\end{align*}
Let $B=\exp(\la{z}_\la{q}(a))$.  It follows that there exist $r_0>0$
and analytic maps $\vv:G_{r_0}\to U^-$, $\vb:G_{r_0}\to B$,
$\vh:G_{r_0}\to U^+$ such that every element $g\in G_{r_0}$ can be
uniquely written as
\begin{equation}\label{eq:vbh}
  g=\vv(g)\vb(g)\vh(g).
\end{equation}

\begin{prop} \label{thm:inter-vol} There exist $r_0,t_0,c>0$ such that
  for measurable subsets $\Psi_1,\Psi_2\subset U^-_{r_0}$, $t>t_0$,
  and $g\in G$, we have
  \begin{equation*}
    \vol_G(\Psi_1Z_{r_0}H_{r_0} g\cap\, a_t\Psi_2Z_{r_0}H_{r_0}) \leq
    c\vol_{U^-}(\Psi_1)\vol_{U^-}(\alpha_t(\Psi_2))
  \end{equation*}
  where $\alpha_t(g)=a_tga_{-t}$.
\end{prop}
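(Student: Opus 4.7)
The plan is to parametrize both $S_1$ and $S_2$ using the local diffeomorphism $\Phi\colon U^-\times B\times H\to G$, $(v,b,h)\mapsto vbh$, coming from the direct-sum decomposition $\mathfrak{g}=\mathfrak{u}^-\oplus\mathfrak{b}\oplus\mathfrak{h}$, and to exploit the contraction of $\alpha_t$ on $U^-$ to extract the factor $\vol_{U^-}(\alpha_t(\Psi_2))$. First I shrink $r_0$ so that $\Phi$ restricts to a diffeomorphism on the relevant neighborhoods, and use the local identity $Z=B\cdot(Z\cap H)$ (which comes from $\mathfrak{z}=\mathfrak{b}\oplus(\mathfrak{z}\cap\mathfrak{h})$) to rewrite $Z_{r_0}H_{r_0}=B_{r_1}H_{r_1}$ for some $r_1\asymp r_0$. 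Since $a_t\in B$ commutes with $B$ and $a_t\psi_2=\alpha_t(\psi_2)a_t$ with $\alpha_t(\psi_2)\in U^-$, the set $S_2$ takes the form
\[
S_2=\Psi_2'\cdot(B_{r_1}a_t)\cdot H_{r_1},\qquad\Psi_2':=\alpha_t(\Psi_2),
\]
so by uniqueness of the $(v,b,h)$-decomposition, a point $x$ lies in $S_2$ if and only if $\vv(x)\in\Psi_2'$, $\vb(x)\in B_{r_1}a_t$, and $\vh(x)\in H_{r_1}$.

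For $x=\psi_1 b_1 h_1 g\in S_1$, I set $\eta:=b_1 h_1 g$ and decompose $\eta=v_\eta b_\eta h_\eta$ in the big cell of $\Phi$. Since $\psi_1 v_\eta\in U^-$, one has $x=(\psi_1 v_\eta) b_\eta h_\eta$, so the $(v,b,h)$-coordinates of $x$ are $(\psi_1 v_\eta,\,b_\eta,\,h_\eta)$, and the condition $x\in S_1\cap S_2$ becomes the conjunction $\psi_1 v_\eta\in\Psi_2'$, $b_\eta\in B_{r_1}a_t$, $h_\eta\in H_{r_1}$. Writing $\vol_G(S_1\cap S_2)$ as a triple integral over $(\psi_1,b_1,h_1)\in\Psi_1\times B_{r_1}\times H_{r_1}$ with bounded Jacobian, I change variables from $(b_1,h_1)$ to $(b_\eta,h_\eta)$; since $v_\eta b_\eta h_\eta g^{-1}=b_1 h_1\in BH$, the element $v_\eta$ is determined as the inverse of the $U^-$-component of $b_\eta h_\eta g^{-1}$, and the induced map $(b_\eta,h_\eta)\mapsto v_\eta$ is (for $g$ in the big cell of $\Phi$) a submersion onto $U^-$ with $\dim Z$-dimensional fibers and bounded Jacobian.

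Now fix $\psi_1\in\Psi_1$. The set of $(b_\eta,h_\eta)$ with $\psi_1 v_\eta(b_\eta,h_\eta)\in\Psi_2'$, $b_\eta\in B_{r_1}a_t$, $h_\eta\in H_{r_1}$, is the preimage under the submersion of $\psi_1^{-1}\Psi_2'\subset U^-$, intersected with $B_{r_1}a_t\times H_{r_1}$. By the co-area formula and the submersion property with bounded Jacobian and bounded fiber volume, this preimage has measure at most $C\vol_{U^-}(\Psi_2')$, uniformly in $\psi_1$. Integrating over $\psi_1\in\Psi_1$,
\[
\vol_G(S_1\cap S_2)\le C\int_{\Psi_1}\vol_{U^-}(\Psi_2')\,d\psi_1=c\,\vol_{U^-}(\Psi_1)\vol_{U^-}(\alpha_t(\Psi_2)),
\]
which is the claimed inequality.

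The main obstacle is the uniform-in-$g$ bound on the submersion's Jacobian and fiber volume: the map $(b_\eta,h_\eta)\mapsto v_\eta$ degenerates when $\eta=b_1 h_1 g$ approaches the boundary of the big cell of $\Phi$, and the fiber volume could a priori depend on $g$. For $t\ge t_0$ large, the requirement $b_\eta\in B_{r_1}a_t$ forces $\eta$ to lie near the translate $a_t\cdot(\text{small})\cdot g$, and a stratification argument on $G$ based on the position of $g$ relative to the big cell, together with the observation that the intersection is empty outside a controlled range of $g$, should yield the uniform constant $c$ independent of $g$.
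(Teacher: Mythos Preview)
Your approach is in the right spirit—using the $U^-\times B\times H$ chart and extracting $\alpha_t(\Psi_2)$ from the contracted factor—but it has a genuine gap that you yourself flag: the uniformity in $g$. The Jacobian of your change of variables $(b_1,h_1)\mapsto(b_\eta,h_\eta)$ and the differential of the map $(b_\eta,h_\eta)\mapsto v_\eta$ both depend on where $b_1h_1g$ sits relative to the big cell of $\Phi$, and for arbitrary $g$ there is no reason these should be bounded independently of $g$. The ``stratification argument'' you gesture at in the last paragraph is not an argument; in particular, the intersection being nonempty only tells you that $g$ lies in a set of the form $(\text{small})\,a_t\,(\text{small})$, but this set moves with $t$, so you cannot simply invoke compactness to get a uniform constant.

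The paper resolves this with a single clean move you are missing: before doing any coordinate computations, pick a point $y$ in the intersection (if it is empty there is nothing to prove) and right-translate both sets by $y^{-1}$. Since the Haar measure is right-invariant this preserves the volume, and it replaces the arbitrary $g$ by an element $\zeta=(u_1z_1)^{-1}\alpha_t(u_2z_2)$ lying in a \emph{fixed} small ball $G_{lr_0}$, independent of $g$ and $t$. After this reduction the paper builds a map $\Phi_{\zeta,t}\colon G_r\to U^-\times Z\times U^-$ whose first coordinate detects membership in $\Psi_1$ and whose third detects membership in $\alpha_t(\Psi_2)$; one checks that $\Phi_{\zeta,t}\to\Phi$ in $C^1$ as $(\zeta,t)\to(e,\infty)$ and that $D\Phi$ is invertible at the identity (the key computation being $(\partial\vv/\partial u^+)_e=-\sigma$). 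This yields a uniform lower bound $\lvert\det D\Phi_{\zeta,t}\rvert>c_1$ for all small $\zeta$ and large $t$, and the volume estimate follows at once from $\Phi_{\zeta,t}(X)\subset\Psi_1\times Z_{\kappa r_0}\times\alpha_t(\Psi_2)$. In short: the right-translation trick is the missing idea, and once you have it, a direct Jacobian bound on a fixed neighborhood of $e$ replaces your co-area/submersion machinery entirely.
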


\begin{proof}
  Let $Y:=\Psi_1Z_{r_0}H_{r_0} g \cap\, a_t\Psi_2Z_{r_0}H_{r_0}$ and
  $y\in Y$.  Let $\zeta_i\in\Psi_iZ_{r_0}H_{r_0}$, $i=1,2$, be such
  that $y=\zeta_1 g=a_t\zeta_2$.  Then
  \begin{equation*}
    Y=(\Psi_1Z_{r_0}H_{r_0}\zeta_1\inv\cap \, \alpha_t(\Psi_2Z_{r_0}H_{r_0}\zeta_2\inv))y.
  \end{equation*}
  We express $\zeta_i=u_iz_ih_i$, where $u_i\in\Psi_i$, $z_i\in
  Z_{r_0}$ and $h_i\in H_{r_0}$.  Then $H_{r_0}h_i\inv\subset
  H_{2r_0}$ and
  \begin{equation*}
    \begin{split}
      Y&\subset (\Psi_1Z_{r_0}H_{2r_0}(u_1z_1)\inv\cap \,
      \alpha_t(\Psi_2Z_{r_0}H_{2r_0}
      (u_2z_2)\inv))y\\
      &\subset (\Psi_1Z_{r_0}H_{2r_0}\zeta
      \cap\,\alpha_t(\Psi_2Z_{r_0}H_{2r_0}))y_1.
    \end{split}
  \end{equation*}
  where $\zeta=(u_1z_1)^{-1}\alpha_t(u_2z_2)$ and
  $y_1=\alpha_t(u_2z_2)^{-1} y$.  Since the measure is right
  translation invariant, it remains to show that for the set
  \begin{equation}
    \label{eq:X}
    X:=\Psi_1Z_{r_0} H_{2r_0} \zeta\cap \alpha_t(\Psi_2 Z_{r_0} H_{2r_0}),
  \end{equation}
  we have
  \begin{equation}
    \label{eq:volX}
    \vol_G(X)\leq c\,\vol_{U^-}(\Psi_1)\vol_{U^-}(\alpha_t(\Psi_2)).
  \end{equation}
  Note that $\alpha_t|_{U^-Z}$ is Lipschitz (uniformly on $t$). Hence,
  there exists $l>0$ such that $\zeta\in G_{l r_0}$.

  For small $r>0$, we have a well-defined map
$$
\vp=\vu^-\times \vz\times \vu^+:G_r\to U^-\times Z\times U^+
$$
which is a diffeomorphism onto its image. For $\zeta,g\in G$ close to
identity, we also have the map $\vp_\zeta(g)=\vp(g\zeta\inv)$.  Note
that
\begin{equation}\label{eq:conv1}
  \vp_\zeta(g)\to \vp(g)\quad\hbox{and}\quad D(\vp_\zeta)_g\to  D(\vp)_g\quad\hbox{as $\zeta\to e$,}
\end{equation}
uniformly for $g$ in a neighborhood of identity.  Given $g=u^-zu^+\in
U^-ZU^+$ with components close to identity, we write
\begin{align*}
  \begin{split}
    g=u^-zu^+&=u^-z\vv(u^+)\vb(u^+) \vh(u^+)\\
    &=\psi(u^-,z,u^+) \cdot \eta(u^-,z,u^+) \cdot \vh(u^+)
  \end{split}
\end{align*}
where
\begin{align*}
  \psi(u^-,z,u^+)&:=u^- z\vv(u^+)z^{-1} \in U^-,\\
  \eta(u^-,z,u^+)&:=z\vb(u^+)\in Z.
\end{align*}
We claim that if $u_1z_1h_1=u_2z_2h_2\in U^-ZH$ with components close
to identity, then $u_1=u_2$. Indeed, we have
$$
z_1^{-1}(u_2^{-1}u_1)z_1= (z_1^{-1}z_2) (h_2h_1^{-1})\in U^-\cap ZH.
$$
Since the map $u\mapsto z_1^{-1}uz_1$, $u\in U^-$, is Lipschitz in a
neighborhood of identity, and $U^-$ is transversal to $ZH$ at identity
(see (\ref{eq:dec0})), it follows that $u_1=u_2$.  In particular, we
note that
\begin{equation}
  \label{eq:X1}
  g\in \Psi_1Z_{r_0}H_{2r_0}\zeta\implies \psi(\vp_\zeta(g))\in\Psi_1.  
\end{equation}

For $g=u^-zu^+\in U^-ZU^+$ with components close to identity, we write
\begin{align*}
  \begin{split}
    g=u^-zu^+
    &=u^-z\alpha_t(\alpha_{-t}(u^+))\\
    &=u^-z\alpha_t(\vv(\alpha_{-t}(u^+))\vb(\alpha_{-t}(u^+))\vh(\alpha_{-t}(u^+)))\\
    &= \phi_t(u^-,z,u^+)\cdot z\vb(\alpha_{-t}(u^+))
    \cdot \alpha_t(\vh(\alpha_{-t}(v))),\\
    &\in U^-\cdot Z\cdot \alpha_t(H),
  \end{split}
\end{align*}
where
\begin{equation*}
  \phi_t(u^-,z,u^+):=u^- z\alpha_t(\vv(\alpha_{-t}( u^+))) z^{-1}\in U^-.
\end{equation*}
By the argument as above, we have
\begin{equation}
  \label{eq:X2}
  g\in \alpha_t(\Psi_2)Z_{r_0}\alpha_t(H_{2r_0})\implies \phi_t(\vp(g))\in\alpha_t(\Psi_2). 
\end{equation}
 
As $t\to\infty$, the map $u^+\mapsto\alpha_t(\vv(\alpha_{-t}( u^+)))$
converges in $C^1$-topology to the constant function $e$. Therefore,
setting $\phi(u^-,z,u^+)=u^-$, we have
\begin{equation}\label{eq:conv2}
  \phi_t(\vp(g))\to\phi(\vp(g))\quad\hbox{and}\quad D(\phi_t)_{\vp(g)}\to D(\phi)_{\vp(g)}\quad \hbox{as $t\to\infty$},
\end{equation}
uniformly on $g$ in a neighborhood of identity.

For small $r>0$, consider the maps $\Phi_{\zeta,t}, \Phi: G_{r}\to
U^-\times Z \times U^-$ defined by
\begin{equation*}
  \begin{split}
    \Phi_{\zeta,t}(g)
    &=(\psi(\vp_\zeta(g)),\eta(\vp_\zeta(g)),\phi_t(\vp(g))),\\
    \Phi(g)&=(\psi(\vp(g)),\eta(\vp(g)),\phi(\vp(g))).
  \end{split}
\end{equation*}
By (\ref{eq:conv1}) and (\ref{eq:conv2}),
\begin{equation}\label{eq:conv3}
  \Phi_{\zeta,t}(g)\to \Phi(g)\quad\hbox{and}\quad D(\Phi_{\zeta,t})_g \to D(\Phi)_g\quad\hbox{
    as $(\zeta,t)\to (e,\infty)$,}
\end{equation}
uniformly on $g$ in a neighborhood of identity. We have
\begin{equation*} 
  D(\Phi\circ \vp^{-1})_{(e,e,e)}
  = \left(
    \begin{tabular}{ccc}
      id & 0 & $\left(\frac{\partial \vv}{\partial u^+}\right)_e$ \\
      0 & id & $\left(\frac{\partial \vb}{\partial u^+}\right)_e$ \\
      id & 0 & 0
    \end{tabular}
  \right).
\end{equation*}
Since every $x\in\la{u}^+$ can be written as
$$
x=-\sigma(x)+0+(x+\sigma(x))\in \la{u}^-\oplus \la{b}\oplus \la{h},
$$
it follows that $\left(\frac{\partial \vv}{\partial
    u^+}\right)_e=-\sigma$ and $\abs{\det D(\Phi\circ
  \vp^{-1})_{(e,e,e)}}=1$.  Hence, by (\ref{eq:conv3}), there exists
$c_1>0$ such that for every $\zeta\in G$ close to $e$, sufficiently
large $t$, and $g\in G$ in a neighborhood of $e$, we have
\begin{equation}\label{eq:det}
  \abs{\det D(\Phi_{\zeta,t})_g} > 
  c_1.
\end{equation}
By \eqref{eq:X}, \eqref{eq:X1} and \eqref{eq:X2},
\begin{equation*}
  \Phi_{\zeta,t}(X)\subset \Psi_1\times Z_{\kappa r_0}\times \alpha_t(\Psi_2)
\end{equation*}
for some $\kappa>0$. Now taking $r_0>0$ sufficiently small,
\eqref{eq:volX} follows from (\ref{eq:det}). This proves the
proposition.
\end{proof}

\begin{prop}\label{p:lifts}
  There exist $r_0>0$ and $d>0$ such that for every $t>0$,
$$
\#(\Gamma\cap G_{r_0} a_t G_{r_0})\le d \det
(\Ad(a_t)|_{\mathfrak{u}^+}).
$$
\end{prop}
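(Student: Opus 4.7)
The plan is to turn the lattice count into a volume estimate via a packing argument and then bound the volume of the tube using the Bruhat-type decomposition $G = U^-ZU^+$ on the open cell. By discreteness of $\Gamma$, I choose $r_0>0$ small enough that $\Gamma\cap G_{2r_0}=\{e\}$; the right-invariance of the metric then forces the translates $\{G_{r_0}\gamma\}_{\gamma\in\Gamma}$ to be pairwise disjoint. Since $G_{r_0}\gamma\subset G_{2r_0}a_tG_{r_0}$ for each $\gamma\in\Gamma\cap G_{r_0}a_tG_{r_0}$, the packing bound
\[
\#(\Gamma\cap G_{r_0}a_tG_{r_0})\cdot \vol_G(G_{r_0})\le \vol_G(G_{2r_0}a_tG_{r_0})
\]
reduces the problem to an upper bound on $\vol_G(G_{2r_0}a_tG_{r_0})$ of the claimed order.

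For this volume bound, I use unimodularity of $G$ (reductive) and right-translate by $a_t^{-1}$ to rewrite $\vol_G(G_{2r_0}a_tG_{r_0}) = \vol_G(G_{2r_0}\alpha_t(G_{r_0}))$, where $\alpha_t(g)=a_tga_{-t}$. After shrinking $r_0$ so that $G_{r_0}\subset U^-_{cr_0}Z_{cr_0}U^+_{cr_0}$, the facts $\alpha_t(U^-_r)\subset U^-_r$ for $t\ge 0$ and $\alpha_t|_Z=\mathrm{id}$ give
\[
\alpha_t(G_{r_0})\subset U^-_{cr_0}Z_{cr_0}\alpha_t(U^+_{cr_0}).
\]
A BCH rearrangement absorbs $G_{2r_0}\cdot U^-_{cr_0}Z_{cr_0}$ into an enlarged box, producing
\[
G_{2r_0}\alpha_t(G_{r_0})\subset U^-_{Kr_0}Z_{Kr_0}\cdot\bigl(U^+_{Kr_0}\alpha_t(U^+_{cr_0})\bigr)
\]
for some $K$ independent of $t$.

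To evaluate the volume of this set, I use that on the open Bruhat cell $U^-ZU^+$, Haar measure in $(v,z,u)$-coordinates takes the form $J_0\det\Ad(z)|_{\la{u}^+}\,dv\,dz\,du$; this is forced by left-invariance under $U^-$, right-invariance under $U^+$, and the transformation rule under right-translation by $Z$. Since $\alpha_t$ is the linear map $\Ad(a_t)$ on $\la{u}^+\cong U^+$, one has $\vol_{U^+}(\alpha_t(U^+_{cr_0})) = \det\Ad(a_t)|_{\la{u}^+}\cdot\vol_{U^+}(U^+_{cr_0})$, and a routine covering estimate yields $\vol_{U^+}(U^+_{Kr_0}\alpha_t(U^+_{cr_0}))\le C_1\det\Ad(a_t)|_{\la{u}^+}$ uniformly in $t$. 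Integrating then gives $\vol_G(G_{2r_0}a_tG_{r_0})\le C_2\det\Ad(a_t)|_{\la{u}^+}$, and combining with the packing bound yields the proposition with $d:=C_2/\vol_G(G_{r_0})$.

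The main obstacle is the BCH rearrangement: making sure the enlarged box constant $K$ does not grow with $t$. This depends crucially on the contraction property $\alpha_t(U^-_r)\subset U^-_r$, which keeps all correction terms bounded as $t\to\infty$. The case of small $t$ is trivial since both sides of the inequality are then bounded by constants, so we may absorb it into the choice of $d$.
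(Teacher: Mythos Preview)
Your argument is correct and follows essentially the same route as the paper: a packing argument reduces the count to a volume bound, and the volume is controlled via the $U^-ZU^+$ decomposition together with the Haar formula $\det(\Ad(z)|_{\la{u}^+})\,du^-\,dz\,du^+$. The paper packages the rearrangement as a separate lemma showing directly $G_r a_t G_r \subset U^-_{lr}Z_{lr}a_tU^+_{lr}$, whereas you first right-translate by $a_t^{-1}$ and then carry out the same manipulation; these are two phrasings of the same computation. One small imprecision: for a general right-invariant Riemannian metric one only has $\alpha_t(U^-_r)\subset U^-_{\kappa r}$ for a constant $\kappa$ independent of $t\ge 0$, not necessarily $\kappa=1$; the paper states it this way, and your proof goes through unchanged with this weaker (and correct) inclusion.
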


To prove this proposition, we use the following lemma:

\begin{lemma}
  \label{lema:UZaV}
  There exist $\epsilon_0>0$ and $l>1$ such that for every $r\in
  (0,\epsilon_0)$ and $t>0$,
  \begin{equation*}
    \label{eq:UZaV}
    G_{r} a_t G_{r} \subset U^-_{l r}Z_{l r}a_t U^+_{l r}.
  \end{equation*}
\end{lemma}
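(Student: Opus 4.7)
The plan is to reduce to routine local manipulations by repeatedly applying the $U^-ZU^+$ decomposition (\ref{eq:uzu}), together with the observation that $Z$ normalizes $U^\pm$, commutes with $a_t$, and---crucially for uniformity in $t$---that $\Ad(a_t)|_{\la{u}^-}$ and $\Ad(a_{-t})|_{\la{u}^+}$ have operator norms bounded by a single constant $K$ for all $t\ge 0$. This last fact is purely linear-algebraic: the eigenvalues of $\Ad(a_t)$ on $\la{u}^-$ are $e^{\lambda_i t}$ with $\lambda_i<0$, so any polynomial factor coming from Jordan blocks is dominated by the exponential decay, and the supremum of the operator norm over $[0,\infty)$ is finite.

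Starting from $g_1 a_t g_2$ with $g_1,g_2\in G_r$, I would first decompose $g_i=u_iz_iv_i$ via (\ref{eq:uzu}), with $u_i\in U^-_{cr}$, $z_i\in Z_{cr}$, $v_i\in U^+_{cr}$ for a Lipschitz constant $c$ of the decomposition. The central step is then to rearrange the inner block $v_1 a_t u_2$: writing it as $v_1(a_tu_2a_t\inv)a_t = v_1\tilde u_2\, a_t$, the bound $K$ gives $\tilde u_2\in U^-_{Kcr}$. Since $v_1\tilde u_2\in U^+U^-$ is $O(r)$-close to the identity, for sufficiently small $\epsilon_0$ the complementary local decomposition (valid because $\la{g}=\la{u}^-\oplus\la{z}\oplus\la{u}^+$) rewrites it as $\check u\check z\check v\in U^-_{c'r}Z_{c'r}U^+_{c'r}$ for some $c'$.

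Substituting gives $g_1a_tg_2 = u_1z_1\check u\check z\check v\, a_t\, z_2v_2$, and the remaining factors are shuffled into position by standard moves: $z_1\check u=(z_1\check u z_1\inv)z_1$ puts $\check u$ on the left inside $U^-$; $\check v\, a_t = a_t(a_t\inv\check v a_t)$ moves $\check v$ past $a_t$ at the uniform cost $K$; $(a_t\inv\check v a_t)z_2 = z_2(z_2\inv(a_t\inv\check v a_t)z_2)$ absorbs the resulting element into $U^+$; and finally $z_2$ commutes past $a_t$. Collecting yields $g_1a_tg_2 = u'z'a_tv'$ with $u'\in U^-$, $z'\in Z$, $v'\in U^+$, each of size bounded by a constant multiple of $r$.

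The only genuine subtlety---and where I would focus the bookkeeping---is ensuring uniformity of these bounds in $t$. Each step is either a commutation of $Z$ with $a_t$, an application of a bi-Lipschitz local decomposition (controlled by shrinking $\epsilon_0$), a conjugation by an element of the bounded set $Z_{\epsilon_0}$, or a conjugation of an element of $U^\pm_r$ by $a_t^{\pm 1}$ back into $U^\pm$ (bounded by $K$, uniformly in $t\ge 0$). Once $\epsilon_0$ is small enough that all intermediate products stay in the domain of the local decompositions, multiplying the finitely many bounds produces the constant $l$, and the desired inclusion $G_ra_tG_r\subset U^-_{lr}Z_{lr}a_tU^+_{lr}$ follows.
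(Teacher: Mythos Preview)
Your argument is correct and uses the same two ingredients as the paper: the bi-Lipschitz local decomposition $G_r\subset U^-_{l_1r}Z_{l_1r}U^+_{l_1r}$ and the uniform-in-$t$ contraction bounds $a_tU^-_ra_t^{-1}\subset U^-_{\kappa r}$, $a_t^{-1}U^+_ra_t\subset U^+_{\kappa r}$. The paper's bookkeeping is slightly leaner---rather than decomposing both $g_i$ at once and re-decomposing the cross term $v_1\tilde u_2$, it absorbs intermediate products back into a single $G_{l_2r}$-ball and decomposes only once more---but the underlying idea is the same.
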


\begin{proof}  
  There exists $l_1>1$ such that for every small $r>0$, $G_r\subset
  U^-_{l_1r}Z_{l_1r}U^+_{l_1r}$ (see (\ref{eq:uzu})).  Since the map
  $u\mapsto a_tua_t^{-1}$, $u\in U^-$, is Lipschitz on compact sets,
  there exists $\kappa>0$ such that $a_t U^-_r a_t^{-1}\subset
  U^-_{\kappa r}$.  Therefore,
  \begin{equation*}
    G_ra_tG_r \subset G_ra_tU^-_{l_1r}Z_{l_1r}U^+_{l_1r}\subset G_rU^-_{\kappa l_1r}Z_{l_1r}a_tU^+_{l_1r}
    \subset G_{l_2 r}a_tU^+_{l_2r}.
  \end{equation*}
  where $l_2=1+(\kappa+1)l_1$. Similarly,
$$
G_{r}a_tU^+_{r}\subset U^-_{l_1r}Z_{l_1r}U^+_{l_1r}a_tU^+_r \subset
U^-_{l_1r}Z_{l_1r}a_t U^+_{\kappa l_1r}U^+_r \subset
U^-_{l_1r}Z_{l_1r}a_t U^+_{(\kappa l_1+1)r}.
$$
This implies the claim.
\end{proof}

\begin{proof}[Proof of Proposition \ref{p:lifts}]
  Let $\epsilon>0$ be such that
$$
G_\epsilon\gamma_1\cap G_\epsilon\gamma_2 =\emptyset\quad\hbox{for
  $\gamma_1\ne \gamma_2$}.
$$
Then
$$
\#(\Gamma\cap G_{r} a_t G_{r})\le \vol_G(G_\epsilon G_{r} a_t
G_{r})/\vol_G(G_\epsilon).
$$
Hence, it follows from Lemma \ref{lema:UZaV}, that for some
$c_1,r_1>0$
$$
\#(\Gamma\cap G_{r} a_t G_{r})\le c_1\,\vol_G(U^-_{r_1}Z_{r_1}a_t
U^+_{r_1}).
$$
Since the Haar measure in $U^-ZU^+$-coordinates is given by
$$
\det(\Ad(z)|_{\mathfrak{u}^+})du^-dzdu^+,
$$
the claim follows.
\end{proof}

\begin{proof}[Proof of Theorem~\ref{thm:vol-B_t}]
  Let $r_0>0$ be sufficiently small.  We have
  \begin{equation}\label{eq:sum}
    \vol_{G/\Gamma} (D_1\cap a_t D_2)
    \leq \sum_{\gamma\in\Gamma} \vol_G(\Psi_1Z_{r_0}H_{r_0} g\gamma\cap
    a_t\Psi_2Z_{r_0}H_{r_0} g).
  \end{equation}
  If $\gamma\in\Gamma$ satisfies
$$
\Psi_1Z_{r_0}H_{r_0} g\gamma\cap a_t\Psi_2Z_{r_0}H_{r_0}
g\neq\emptyset,
$$
then
$$
g \gamma g\inv \in (\Psi_1Z_{r_0}H_{r_0})^{-1} a_t
\Psi_2Z_{r_0}H_{r_0} \subset G_{3r_0}a_t G_{3r_0}.
$$
Hence, by Proposition \ref{p:lifts}, the number of terms in the sum
(\ref{eq:sum}) is bounded by $d \det (\Ad(a_t)|_{\mathfrak{u}^+})$ for
some $d=d(g)>0$.  Applying Proposition \ref{thm:inter-vol}, we deduce
from (\ref{eq:sum}) that
$$
\vol_{G/\Gamma} (D_1\cap a_t D_2) \leq d \det
(\Ad(a_t)|_{\mathfrak{u}^+})\cdot
c\vol_{U^-}(\Psi_1)\vol_{U^-}(\alpha_t(\Psi_2)).
$$
This proves the theorem.
\end{proof}

In the rest of this section we compute the asymptotics for the number
of lattice points in the boxes $U^-_{r_1}Z_{r_2}a_t U^+_{r_3}$ as
$t\to\infty$. This result is of independent interest, but it is not
needed in the proof of the main theorem.  We will assume that the
action of $\{a_t\}$ on $G/\Gamma$ is mixing. Due to the Howe--Moore
theorem~\cite{Howe+Moore:vanishing} on vanishing of matrix
coefficients, the mixing condition is satisfied if $\Gamma$ is an
irreducible lattice in $G$. For example, this irreducibility condition
is satisfied if $G$ is a connected noncompact simple Lie group.

\begin{theo}
  \label{thm:gamma}
  For every $r_1,r_2,r_3>0$,
  \begin{equation*}
    \begin{split}
      \card{\Gamma \cap U^-_{r_1}Z_{r_2}a_tU^+_{r_3}}
      \sim_{t\to\infty}
      \frac{\vol_G(U^-_{r_1}Z_{r_2}a_tU^+_{r_3})}{\vol_{G/\Gamma}(G/\Gamma)}
      = \frac{\lambda(\vr)\det(\Ad
        a_t|_{\la{U}^+})}{\vol_{G/\Gamma}(G/\Gamma)},
    \end{split}
  \end{equation*}
  where
  \[
  \lambda(\vr)=\vol_{U^-}(U^-_{r_1})\vol_{U^+}(U^+_{r_3})\int_{Z_{r_2}}\rho(z)\,dz,
  \]
  $\rho(z)=\abs{\det(\Ad z|_{\la{U^+}})}$, and $dz$ denotes the Haar
  integral on $Z$ associated to $\vol_Z$.
\end{theo}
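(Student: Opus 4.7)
The plan is to follow the standard Eskin--McMullen/Duke--Rud\-nick--Sarnak strategy: compute the volume of $B_t:=U^-_{r_1}Z_{r_2}a_tU^+_{r_3}$ explicitly, prove that the normalized pushforwards of Haar measure on $B_t$ equidistribute in $G/\Gamma$ by using the mixing hypothesis together with Margulis' equidistribution of expanding horospheres, and then extract the counting asymptotic by a thickening/well-roundedness argument.

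First I would compute $\vol_G(B_t)$. In the $U^-ZU^+$ coordinates the Haar measure of $G$ has the form $\rho(z)\,du^-dz\,du^+$ with $\rho(z)=|\det\Ad(z)|_{\la u^+}|$. Since $a_t\in Z$, we may write each element of $B_t$ as $u^-\cdot(z'a_t)\cdot u^+$ and absorb $a_t$ into the $Z$-factor; Fubini and $\rho(z'a_t)=\rho(z')\rho(a_t)$ give
\[
\vol_G(B_t)=\vol_{U^-}(U^-_{r_1})\vol_{U^+}(U^+_{r_3})\int_{Z_{r_2}}\rho(z)\,dz\cdot\det(\Ad a_t|_{\la u^+})=\lambda(\vr)\det(\Ad a_t|_{\la u^+}).
\]

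Next, I would establish equidistribution of the probability measures $\mu_t:=\vol_G(B_t)^{-1}\cdot\pi_*(\mathrm{Haar}|_{B_t})$ on $G/\Gamma$, where $\pi\colon G\to G/\Gamma$. Writing
\[
\mu_t(f)=\frac{1}{\lambda(\vr)}\int_{U^-_{r_1}}\!\int_{Z_{r_2}}\rho(z)\biggl(\int_{U^+_{r_3}} f(u^-z\cdot a_tu^+\Gamma)\,du^+\biggr)dz\,du^-,
\]
the inner integral for fixed $(u^-,z)$ equals $\int_{U^+_{r_3}} (f\circ L_{u^-z})(a_tu^+\Gamma)\,du^+$, and left translation by $u^-z$ preserves the Haar measure on $G/\Gamma$. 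Since $U^+$ is the expanding horospherical of $a_t$, the Margulis thickening argument combined with the mixing hypothesis on $\{a_t\}$ yields
\[
\frac{1}{\vol_{U^+}(U^+_{r_3})}\int_{U^+_{r_3}} f(a_tu^+\Gamma)\,du^+\longrightarrow\frac{1}{\vol_{G/\Gamma}(G/\Gamma)}\int_{G/\Gamma} f\,d\mathrm{vol},
\]
and the convergence is bounded uniformly for $(u^-,z)$ in the compact set $U^-_{r_1}\times Z_{r_2}$. Dominated convergence, combined with the volume formula above, yields $\mu_t(f)\to\bar f:=\vol(G/\Gamma)^{-1}\int f$ for every $f\in C_c(G/\Gamma)$.

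Finally I would pass from equidistribution to counting via a standard well-roundedness/thickening argument. Pick nonnegative bump functions $\phi_\epsilon\in C_c(G)$ with $\int\phi_\epsilon=1$ and $\supp\phi_\epsilon\subset G_\epsilon$, and let $\tilde\phi_\epsilon$ be their $\Gamma$-periodizations on $G/\Gamma$. Sandwiching $\#(\Gamma\cap B_t^{-\epsilon})\leq\int_G\phi_\epsilon(g)\#(\Gamma\cap g^{-1}B_t)\,dg\leq\#(\Gamma\cap B_t^{+\epsilon})$ between slight shrinkings/thickenings $B_t^{\pm\epsilon}$ of $B_t$, the middle integral rewrites by unfolding as the pairing $\mu_t(\tilde\phi_\epsilon)\cdot\vol(B_t)$, which by the equidistribution above tends to $\vol(B_t)/\vol(G/\Gamma)$. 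The product structure $B_t=U^-_{r_1}Z_{r_2}a_tU^+_{r_3}$ and the fact that conjugation by $a_t^{-1}$ contracts $U^+$ imply $\Omega_\epsilon B_t\Omega_\epsilon\subset U^-_{r_1+c\epsilon}Z_{r_2+c\epsilon}a_tU^+_{r_3+c\epsilon}$ for $t$ large, so $\vol(B_t^{\pm\epsilon})/\vol(B_t)\to 1$ as $\epsilon\to 0$ uniformly in $t$, yielding $\#(\Gamma\cap B_t)\sim\vol(B_t)/\vol(G/\Gamma)$.

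The main obstacle is the equidistribution step for $a_tU^+_{r_3}\Gamma$: although it is a classical consequence of mixing by Margulis' argument, obtaining the uniformity in $(u^-,z)$ needed to interchange limit and integral, and upgrading weak equidistribution to the pointwise counting asymptotic, require the careful bookkeeping of the well-rounded thickening in $U^-ZU^+$-coordinates described above; the uniformity on compacta follows because the mixing rate is independent of the base point.
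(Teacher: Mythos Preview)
Your proposal is correct and follows essentially the same route as the paper: compute $\vol_G(B_t)$ in $U^-ZU^+$ coordinates, deduce equidistribution of $a_tU^+_{r_3}\Gamma$ from mixing (the paper does this by first averaging over a small $U^-_rZ_rU^+_{r_3}$-box and then shrinking the $U^-_rZ_r$ factor, which is precisely the Margulis thickening you invoke), and finish with the DRS/Eskin--McMullen sandwiching argument. The only step the paper spells out in more detail is the well-roundedness, which it isolates as Lemma~\ref{lema:perturb} (built on Lemma~\ref{lema:UZaV}); this is exactly the ``product structure and $a_t^{-1}$-contraction of $U^+$'' observation you sketch in your last paragraph.
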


We will prove this theorem using mixing of the flow $a_t$ (as in
\cite{DRS,Esk+McM:mixing}) and the following lemma:

\begin{lemma}
  \label{lema:perturb}
  For every $r_0>0$, there exist $l,\epsilon_0>0$ such that if $u\in
  U^-_{r_0}$, $z\in Z_{r_0}$ and $v\in U^+_{r_0}$, then for any $g\in
  G_s$ with $s\in(0,\epsilon_0)$ and $t>0$, we have
  \[
  g(uza_tv)=(uu_1)(z_1z)a_t (v_1v),
  \]
  where $u_1\in U^-_{ls}$, $z\in Z_{ls}$, and $v_1\in U^+_{ls}$.
\end{lemma}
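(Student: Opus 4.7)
The plan is to reduce the three-sided identity to a one-sided factorisation in $U^-\cdot Z\cdot U^+$ of the conjugate $u\inv gu$, and then to transport the $U^+$-factor backward through $z$ and $a_t$ to produce $v_1$. Concretely, suppose one can write $gu=u\cdot(u_1z_1v_1')$ with $u_1\in U^-$, $z_1\in Z$, $v_1'\in U^+$ small. Setting
\[
\tilde v_1:=z\inv v_1'z\in U^+\qquad\text{and}\qquad v_1:=a_t\inv\tilde v_1a_t\in U^+,
\]
direct substitution, using $v_1'=za_tv_1a_t\inv z\inv$, yields
\[
(uu_1)(z_1z)a_t(v_1v)=u\cdot u_1z_1\cdot(za_tv_1a_t\inv z\inv)\cdot za_tv = u\cdot(u_1z_1v_1')\cdot za_tv = g\cdot uza_tv,
\]
as desired. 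So the task splits into (a) quantitatively decomposing $u\inv gu$ in $U^-ZU^+$-coordinates, and (b) controlling the size of $v_1$ after the backward transport through $z$ and $a_t$.

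For (a), since $u\in U^-_{r_0}$ ranges over a fixed compact set, conjugation by $u$ is bi-Lipschitz with a constant $c_1=c_1(r_0)$, so $u\inv gu\in G_{c_1 s}$. The product map $U^-\times Z\times U^+\to G$ is an analytic diffeomorphism near the identity (equation (\ref{eq:uzu})), hence quantitatively bi-Lipschitz on a small neighbourhood of $e$. Choosing $\epsilon_0=\epsilon_0(r_0)$ small enough that $G_{c_1\epsilon_0}$ lies inside this neighbourhood yields a unique triple decomposition $u\inv gu=u_1z_1v_1'$ with $u_1\in U^-_{c_2 s}$, $z_1\in Z_{c_2 s}$, $v_1'\in U^+_{c_2 s}$ for some $c_2=c_2(r_0)$. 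Since $z\in Z_{r_0}$ is bounded and $Z$ normalises $U^+$, the conjugation $v_1'\mapsto z\inv v_1'z$ is bi-Lipschitz on $U^+$ near $e$, so $\tilde v_1\in U^+_{c_3 s}$ for some $c_3=c_3(r_0)$.

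For (b)---and this is the only place where the hypothesis $t>0$ enters essentially---the map $v\mapsto a_t\inv va_t$ is a contraction on $U^+$ near the identity, which is exactly the defining property of $U^+$. Hence $d(v_1,e)\le d(\tilde v_1,e)\le c_3 s$, giving $v_1\in U^+_{c_3 s}$. Taking $l:=\max(c_2,c_3)$ completes the proof. There is no real obstacle in the argument: it is routine bi-Lipschitz bookkeeping around the analytic $U^-ZU^+$ diffeomorphism. The one delicate point is that if $t$ were negative then $a_t\inv(\cdot)a_t$ would expand $U^+$ by a factor $\sim e^{\lambda\abs{t}}$, ruling out any $l$ uniform in $t$; the hypothesis $t>0$ is precisely what turns the final transport into a contraction and permits $l$ to be chosen independently of $t$.
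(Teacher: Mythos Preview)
Your argument is correct and is essentially the paper's proof: both conjugate $g$ by $u$, use the local $U^-ZU^+$ factorisation near $e$, exploit that $\alpha_{-t}$ is uniformly Lipschitz on $U^+$ for $t>0$, and finish by conjugating the $U^+$-factor through $z\in Z_{r_0}$. The only cosmetic difference is that the paper first writes $g(uza_tv)=u(g_1a_t)zv$ (using $za_t=a_tz$) and invokes Lemma~\ref{lema:UZaV} to factor $g_1a_t\in U^-_{ls}Z_{ls}a_tU^+_{ls}$ in one stroke, whereas you factor $g_1$ first and then pull the $U^+$-part through $a_t$ by hand; note that for a merely right-invariant metric this transport need not have Lipschitz constant exactly $1$, but it is uniformly bounded in $t>0$, which is all your argument requires.
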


\begin{proof}
  We have
$$
g(uza_tv)=u(g_1a_t)zv
$$
where $g_1=u^{-1}gu\in G_{l_1 s}$ for some $l_1=l_1(r_0)>0$. By Lemma
\ref{lema:UZaV},
$$
g_1a_t=u_1z_1a_tv_1
$$
where $u_1\in U_{l_2s}^-$, $z_1\in Z_{l_2 s}$ and $v_1\in U^+_{l_2s}$
for some $l_2=l_2(r_0)>0$.  Hence,
$$
g(uza_tv)=(uu_1)(z_1z)a_t(v_2v)
$$
where $v_2=z\inv v_1 z\in U^+_{l_3s}$ for some $l_3=l_3(r_0)>0$.  This
implies the claim.
\end{proof}

\begin{proof}[Proof of Theorem~\ref{thm:gamma}]
  Let $du$, $dz$ and $dv$ denote the Haar integrals on $U^-$, $Z$ and
  $U^+$, respectively. A Haar measure on $G$ is defined by
  \begin{equation}
    \label{eq:vol-form}
    \int_G f\,d\mu = \int_{U^-} \int_Z \int_{U^+}
    f(uzv)\rho(z)\,dudzdv,\quad
    \forall f\in \Cc(G).
  \end{equation}
  Now given $\vr=(r_1,r_2,r_3)$, we put
  $E_t(\vr)=U^-_{r_1}Z_{r_2}a_tU^+_{r_3}$.  Then
  \begin{equation} \label{eq:vol-form2}
    \mu(E_t(\vr))=\vol_{U^-}(U^-_{r_1})\left(\int_{Z_{r_2}}
      \rho(z)\,dz\right)\rho(a_t)\vol_{U^+}(U^+_{r_3})\\
    =\lambda(\vr)\rho(a_t).
  \end{equation}

  Let $l,\epsilon_0>0$ be as in Lemma~\ref{lema:perturb}. We use
  parameters $s\in (0,\epsilon_0)$, $r_i^\pm=r_i\pm ls$, and
  $\vr^\pm=(r_1^\pm,r_2^\pm,r_3^\pm)$.  Then by
  Lemma~\ref{lema:perturb}, for every $t>0$,
  \begin{equation}\label{eq:rm}
    E_t(\vr^-)\subset \bigcap_{g\in G_s} gE_t(\vr)\subset
    E_t\subset \bigcup_{g\in G_s} gE_t(\vr)\subset E_t(\vr^+).
  \end{equation}
  
  Let $\bar\mu$ denote the finite $G$-invariant measure on $G/\Gamma$
  associated to $\mu$. By our assumption the action of $\{a_t\}_{t>0}$
  by left translations on $G/\Gamma$ is mixing. In other words, if we
  put $y_0=e\Gamma$, then given any $\phi\in\Cc(G/\Gamma)$ and small
  $r>0$,
  \begin{equation*}
    \frac{1}{\mu(U^-_{r}Z_{r}U^+_{r_3})}\int_{U^-_{r}}\int_{Z_r}
    \int_{U^+_{r_3}}\phi(a_tuzvy_0)\rho(z)\,dudzdv\to \frac{1}{\bar\mu(G/\Gamma)}
    \int_{G/\Gamma} \phi\,d\bar\mu
  \end{equation*}
  at $t\to\infty$.  Since $a_tuza_t\inv\to z$ as $t\to\infty$, and
  since $\rho(z)\to e$ as $z\to e$, from the uniform continuity of
  $\phi$, we deduce that
  \begin{equation*}
    \label{eq:U+}
    \lim_{t\to\infty} 
    \frac{1}{\vol_{U^+}(U^+_{r_3})}\int_{U^+_{r_3}}\phi(a_tvy_0)\,dv
    =\frac{1}{\bar\mu(G/\Gamma)} \int_{G/\Gamma} \phi\,d\bar\mu.
  \end{equation*}
  Hence, in view of \eqref{eq:vol-form} and (\ref{eq:vol-form2}),
  \begin{equation}
    \label{eq:33}
    \lim_{t\to\infty}
    \frac{1}{\rho(a_t)}\int_{E_t(\vr)}\phi(gy_0)\,d\mu(g)
    =\frac{\lambda(\vr)}{\bar\mu(G/\Gamma)} 
    \int_{G/\Gamma} \phi\,d\bar\mu.
  \end{equation}
  Now as in \cite{DRS,Esk+McM:mixing}, we introduce functions on
  $G/\Gamma$:
  \begin{align*}
    \label{eq:29}
    F_t(gy_0)=\sum_{\gamma\in\Gamma} \chi_{E_t(\vr)}(g\gamma)
    \quad\text{and} \quad F_t^\pm(gy_0)=\sum_{\gamma\in\Gamma}
    \chi_{E_t(\vr^\pm)}(g\gamma).
  \end{align*}
  We note that $F_t(y_0)=\card{\Gamma \cap E_t(\vr)}$. Let
  $\phi\in\Cc(G/\Gamma)$ with $\supp(\phi)\subset G_sy_0$ and
  $\int_{G/\Gamma}\phi\,d\bar\mu=1$. Then by (\ref{eq:rm}),
  \begin{equation}
    \label{eq:31}
    \int_{G/\Gamma} F_t^-(y)\phi(y)\,d\bar\mu(y) \leq F_t(y_0)\leq
    \int_{G/\Gamma} F_t^+(y)\phi(y)\,d\bar\mu(y).  
  \end{equation}
  Using (\ref{eq:33}), we obtain
  \begin{equation}
    \label{eq:30}
    \begin{split}
      &\frac{1}{\rho(a_t)} \int_{G/\Gamma}
      F_{t}^\pm(y)\phi(y)\,d\bar\mu(y) =\int_{G/\Gamma}
      \sum_{\gamma\in\Gamma}\chi_{E_t^\pm(\vr)}(g\gamma)\phi(gy_0)\,d\bar\mu(g\Gamma)\\
      =&\frac{1}{\rho(a_t)}\int_{G}\chi_{E_t(\vr^\pm)}(g)
      \phi(gy_0)d\mu(g) \to
      \frac{\lambda(\vr^\pm)}{\bar\mu(G/\Gamma)}\quad\hbox{as
        $t\to\infty$.}
    \end{split}
  \end{equation}
  Since $\lambda(\vr^+)/\lambda(\vr^-)\to 1$ as $s\to 0$, from
  \eqref{eq:31} and \eqref{eq:30}, we conclude that
  \[
  \card{\Gamma \cap E_t(r)}=F_t(y_0)\sim_{t\to\infty}
  \frac{\lambda(\vr)\rho(a_t)}{\bar\mu(G/\Gamma)}
  \]
  as required.
\end{proof}

\section{Proof of the main theorems}\label{sec:proof}

To prove Theorem~\ref{thm:bcl}, we use a converse of the
Borel-Cantelli lemma. It is well-known that such a converse holds
under some quasi-independence condition. We will use the following
version (see \cite[\S1]{Sul}, and also \cite[Lemma~2.3]{har},
\cite[Lemma~5]{Spr} for more general results):

\begin{prop}
  \label{prop:dynbcl}
  Let $(Y,\mu)$ be a finite measure space. Let $\{F_n\}_{n\in\N}$ be a
  sequence of measurable subsets of $Y$ such that $\sum_{n=1}^\infty
  \mu(F_n)=\infty$. Suppose there exist $n_0\in\N$ and a constant
  $C>0$ such that
  \begin{equation}
    \label{eq:25}
    \mu(F_n\cap F_m)\leq C\mu(F_n)\mu(F_m), \quad \forall
    m,n\in\N,\ \abs{m-n}\geq n_0.
  \end{equation}
  Let $F=\cap_{n\in\N}(\cup_{m\geq n} F_m)$. Then $\mu(F)>0$.
\end{prop}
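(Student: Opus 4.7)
The plan is to prove the claim by the second-moment method. Since $F=\bigcap_{N\in\N}A_N$ with $A_N:=\bigcup_{m\geq N}F_m$ a decreasing intersection of measurable sets, continuity of $\mu$ from above reduces matters to establishing a uniform lower bound $\mu(A_N)\geq c>0$ independent of $N$.

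To obtain such a bound, I would fix $N$, let $M>N$, and consider the partial sum $S_{N,M}:=\sum_{n=N}^{M}\chi_{F_n}$, whose support $\{S_{N,M}>0\}=\bigcup_{n=N}^{M}F_n$ is contained in $A_N$. Applying the Cauchy--Schwarz inequality to the factorization $S_{N,M}=S_{N,M}\chi_{\{S_{N,M}>0\}}$ yields
\[
\mu(A_N)\geq\mu(\{S_{N,M}>0\})\geq\frac{\bigl(\int_Y S_{N,M}\,d\mu\bigr)^2}{\int_Y S_{N,M}^2\,d\mu}=\frac{\bigl(\sum_{n=N}^{M}\mu(F_n)\bigr)^2}{\sum_{n,m=N}^{M}\mu(F_n\cap F_m)}.
\]
The numerator tends to $\infty$ as $M\to\infty$ by the divergence hypothesis. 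For the denominator, I would split the double sum according to whether $|n-m|<n_0$ or $|n-m|\geq n_0$: the near-diagonal terms are bounded by $(2n_0-1)\sum_{n=N}^{M}\mu(F_n)$ using the trivial estimate $\mu(F_n\cap F_m)\leq\mu(F_n)$, while the far-diagonal terms are bounded by $C\bigl(\sum_{n=N}^{M}\mu(F_n)\bigr)^2$ by the quasi-independence hypothesis~\eqref{eq:25}.

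Substituting these estimates and letting $M\to\infty$ along the divergent tail, the linear (near-diagonal) contribution is dominated by the quadratic main term, and I obtain $\mu(A_N)\geq 1/C$ uniformly in $N$. Continuity of measure then gives $\mu(F)=\lim_{N\to\infty}\mu(A_N)\geq 1/C>0$. No step in this plan is genuinely hard; the main (minor) point to verify is that the tail $\sum_{n\geq N}\mu(F_n)$ diverges for every fixed $N$ (which is immediate from $\sum_n\mu(F_n)=\infty$), as this is precisely what forces the near-diagonal contribution to become asymptotically negligible relative to the quadratic main term.
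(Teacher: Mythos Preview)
Your argument is correct and is precisely the standard second-moment (Cauchy--Schwarz/Paley--Zygmund) proof of this quantitative Borel--Cantelli converse. The paper does not give its own proof of this proposition; it simply cites \cite[\S1]{Sul} (and \cite[Lemma~2.3]{har}, \cite[Lemma~5]{Spr}), and the argument you wrote is exactly the one found in those references, so there is nothing to compare.
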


\begin{proof}[Proof of Theorem~\ref{thm:bcl}]
  First we suppose that
  \[
  \sum_{n=1}^\infty \vol_{U^-}(\Psi_n)=\infty \quad\text{and}\quad
  t_{n+1}-t_n\geq \delta_0>0, \ \forall n\in\N.
  \]
  Let $r_0>0$ be as in Theorem~\ref{thm:vol-B_t}.  Since $B$ has
  positive measure, there exist $z\in Z$ and $h\in H$ such that for
  every $r_0>0$, the set $B\cap zZ_{r_0}H_{r_0}h$ has positive measure
  as well.  Let $B_0:=z^{-1}Bh^{-1}\cap Z_{r_0}H_{r_0}$. We consider
  the sets
$$
F_n=a_{t_n}\Psi_{n}zB_0 hy_0=z a_{t_n}(z^{-1}\Psi_{n}z)B_0 hy_0\subset
G/\Gamma.
$$
Recall that we are assuming that the sets $\Psi_n$ are contained in a
small neighborhood of identity. Hence, taking $r_0$ sufficiently
small, the sets $(z^{-1}\Psi_{n}z)B_0 h$ project injectively on
$G/\Gamma$, and
$$
\sum_{n=1}^\infty \vol_{G/\Gamma}(F_n)=\infty.
$$
By Theorem~\ref{thm:vol-B_t}, there exist $n_0\in \N$ and $C_1,C_2>0$
such that
\[
\vol_{G/\Gamma}(F_k\cap F_l)\leq C_1
\vol_{U^-}(\Psi_k)\vol_{U^-}(\Psi_l)\leq C_2
\vol_{G/\Gamma}\vol(F_k)\vol_{G/\Gamma}(F_l)
\]
for all $k,l\in\N$ such that $\abs{k-l}\geq n_0$.  Let
$F=\cap_{n\in\N}\cup_{m\geq n} F_m$.  Then by
Proposition~\ref{prop:dynbcl} applied to $Y=G/\Gamma$, we conclude
that $\vol_{G/\Gamma}(F)>0$. Now for any $y\in F$, we have that
\begin{equation}\label{eq:card}
  \card{\{n\in\N: y\in a_{t_n}\Psi_nBy_0\}}\geq \card{\{n\in \N: y\in
    F_n\}}=\infty.
\end{equation}
This proves the first part of the theorem.

To prove the second part, we assume that $\sum_{n=1}^\infty
\vol_G(\Psi_n)< \infty$. If we set $F_n=a_{t_n}\Psi_nBy_0$, then
\[
\sum_{n=1}^\infty \vol_{G/\Gamma}(F_n)<\infty.
\]
Therefore by the Borel-Cantelli Lemma for almost all $y\in G/\Gamma$,
we have that $y\in F_n$ for only finitely many $n\in\N$. This proves
the second part of the theorem.
\end{proof}

\begin{proof}[Proof of Corollary~\ref{cor:bcl}]
  If we put $t_n=nt_0$, the condition of the first part of
  Theorem~\ref{thm:bcl} is satisfied; and let the notation be as in
  the proof of this part given as above.  Since $\Psi_{n+1}\subset
  \Psi_{n}$ for all $n\in\N$, we have $F_{n+1}\subset T(F_n)$, and
  $F\subset T(F)$. Hence, since $T$ is ergodic, the set $F$ has full
  measure.  Now the claim follows from \eqref{eq:card}.
\end{proof}

To prove Theorem \ref{thm:main-qform}, we will need the following:

\begin{prop} \label{prop:Gamma} The $\Gamma$-action on $\partial X$ is
  ergodic with respect to $\mu_\infty$.
\end{prop}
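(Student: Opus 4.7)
The plan is to realize $\partial X$ as a homogeneous space $G/P$ and then to invoke the standard measure-class duality between the $\Gamma$-action on $G/P$ and the $P$-action on $\Gamma\backslash G$. First I would identify the stabilizer $P = \Stab_G(\bar f_1)$ inside $G = \hbox{O}(Q)$. In the coordinates of Section~\ref{sec:vol}, $P$ is the maximal parabolic subgroup containing the one-parameter subgroup $A = \{a_t\}$ (which preserves the line $\R f_1$), the centralizer $Z$, and the expanding horospherical subgroup $U^+$ consisting of the elements fixing $f_1$. Since $G$ acts transitively on $\partial X$ by Witt's theorem, the orbit map yields a $G$-equivariant identification $\partial X\cong G/P$, and the semi-invariant probability measure $\mu_\infty$ lies in the measure class of the push-forward of Haar measure under the natural projection $G\to G/P$.

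Next I would appeal to the following standard duality: a measurable subset $E\subset G/P$ is $\Gamma$-invariant if and only if the subset $\Gamma\backslash \pi\inv(E)\subset\Gamma\backslash G$ is $P$-invariant, where $\pi\colon G\to G/P$ is the quotient. Since $\mu_\infty$ and the push-forward of Haar on $\Gamma\backslash G$ lie in matching measure classes, $\Gamma$-ergodicity of $\mu_\infty$ on $G/P$ is equivalent to $P$-ergodicity of the Haar measure on $\Gamma\backslash G$. Because $A\subset P$, it therefore suffices to prove that the flow $\{a_t\}$ acts ergodically on $\Gamma\backslash G$.

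This last step is the same vanishing-of-matrix-coefficients statement already invoked just before Theorem~\ref{thm:gamma}: by the Howe--Moore theorem, the flow $\{a_t\}$ is in fact mixing, and a fortiori ergodic, on $\Gamma\backslash G$, provided that $G$ is semisimple without compact factors and $\Gamma$ is irreducible. Since $d\geq 3$ and $Q$ is a rational nondegenerate indefinite form with $X(\Z)\neq\emptyset$, the real group $G=\hbox{O}(Q)\cong \hbox{O}(p,q)$ is semisimple and the arithmetic lattice $\Gamma = G(\Z)$ is irreducible. The main technical point to verify is this irreducibility in the exceptional case $(p,q)=(2,2)$, where $G$ decomposes into two rank-one factors; however, even there the associated $\Q$-algebraic group is $\Q$-almost simple, so by Borel--Harish-Chandra the lattice is irreducible and Howe--Moore applies. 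Combining these ingredients yields the ergodicity of the $\Gamma$-action on $(\partial X,\mu_\infty)$.
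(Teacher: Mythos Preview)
Your approach is essentially the same as the paper's: identify $\partial X$ as a homogeneous space, use the duality between the $\Gamma$-action on $G/P$ and the $P$-action on $\Gamma\backslash G$, and deduce ergodicity from a vanishing-of-matrix-coefficients argument. The paper passes to the identity component $G^0$ first (noting that $\partial X$ has at most two components, exchanged by $-I\in\Gamma$) and invokes Mautner's lemma for $P$ directly, whereas you work with the full group $G$ and use Howe--Moore for the smaller subgroup $A\subset P$; these are minor variations on the same idea.

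There is, however, a concrete error in your handling of the $(2,2)$ case. You assert that the $\Q$-algebraic group is $\Q$-almost simple, and hence that $\Gamma$ is irreducible. This is false in general: for a rational form $Q$ of signature $(2,2)$ whose discriminant is a square in $\Q^\times/(\Q^\times)^2$ (for instance $Q=x_1^2+x_2^2-x_3^2-x_4^2$), the spin group splits over $\Q$ as a product of two $\Q$-groups of type $A_1$, and the arithmetic lattice is then reducible (commensurable with a product $\Gamma_1\times\Gamma_2$). The paper sidesteps this by checking instead that the projection of $P$ to each simple factor of $G^0$ is noncompact, and then applying Mautner's lemma. The analogous statement for your subgroup $A=\{a_t\}$ is also true---under the local isomorphism $\mathrm{SO}^0(2,2)\simeq \mathrm{PSL}_2(\R)\times\mathrm{PSL}_2(\R)$ the element $a_t$ projects to a nontrivial diagonal one-parameter subgroup in each factor---and with this correction Howe--Moore applied factorwise yields mixing (hence ergodicity) of $A$ on $\Gamma\backslash G$ even when $\Gamma$ is reducible. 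So your overall strategy is sound, but the justification you give in the exceptional case must be replaced by the ``noncompact projection to each factor'' argument.
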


\begin{proof}
  We denote by $G^0$ the connected component of identity of $G$.  The
  space $\partial X$ is not connected in general, but it consists of
  at most two connected components, which are mapped to each other by
  the transformation $x\mapsto -x$. Since this transformation is in
  $\Gamma$, it suffices to show that $G^0\cap \Gamma$ acts ergodically
  on the connected components of $\partial X$. Each connected
  component can be identified with a homogeneous space $G^0/P$ of
  $G^0$ where $P$ is a closed noncompact algebraic subgroup of
  $G^0$. Note that $G^0$ is a connected simple Lie group unless the
  signature of the quadratic form is $(2,2)$, and in the later case,
  $G^0$ is semisimple and one can check that the projection of $P$ to
  the nontrivial simple factors of $G^0$ are noncompact.  Hence, by
  Mautner's lemma, the $P$-action on $G^0/(G^0\cap \Gamma)$ is ergodic
  with respect to the $G^0$-invariant probability measure.  Therefore
  $(G^0\cap \Gamma)$-action on $G^0/P$ is ergodic with respect to the
  $G^0$-semi-invariant probability measure (see, for example,
  \cite{Furst:uniq}).
\end{proof}

Now we begin the proof of Theorem \ref{thm:main-qform}.  We use
notation as in \S\ref{sec:vol}. In particular, we recall that
$G=\hbox{O}(Q)$, $A=\{a_t\}$ is the one-parameter subgroup defined in
(\ref{eq:at}), and $w_0=(f_1+(m/2)f_2)\in X$.  Let
$H=\hbox{Stab}_G(w_0)$. Note that $H$ is the set of fixed points of
the involution $\sigma(g)= s_0gs_0$, where $s_0\in \hbox{O}(Q)$ is
given by
$$
s_0:\;\; f_1\mapsto -\frac{m}{2}f_{d+1},\;\; f_{d+1}\mapsto
-\frac{2}{m}f_1,\;\; f_i\mapsto f_i,\; i=2,\ldots,d.
$$
Moreover, $\sigma(a_t)=a_{-t}$.  Let $\Gamma=G(\mathbb{Z})$, which is
a lattice in $G$ by the Borel--Harish-Chandra theorem.  Hence, we are
in the setting of Theorem~\ref{thm:bcl}.  Note that $U^-=\{u(\vs):\,
\vs\in\R^{d-1}\}$, where $u(\vs)$ is defined in (\ref{eq:us}).

Let $x_0\in X(\Z)$ and $g_0\in G$ be such that $w_0=g_0x_0$.

\begin{proof}[Proof of Theorem \ref{thm:main-qform}(i)]
  Suppose that
$$
\int_1^\infty t^{d-2}\psi(t)^{d-1}\, dt=\int_0^\infty
(e^t\psi(e^t))^{d-1}\,dt=\infty.
$$ 
Then by Lemma~\ref{lema:phi}(ii), there exists a measurable
quasi-conformal function $\psi_1\leq \psi$ such that
$$
e^t\psi_1(e^t)\to_{t\to\infty} 0,\quad
\frac{\psi(t)}{\psi_1(t)}\to_{t\to\infty} \infty,\quad \int_0^\infty
(e^t\psi_1(e^t))^{d-1}\,dt=\infty.
$$
In view of Lemma~\ref{prop:CD} and \eqref{eq:42}, there exist
$c,t_2>0$ such that
\begin{equation}
  \label{eq:12}
  \bigcup_{t>t_2} a_tu(\cU_t(\psi_1))w_0\subset C(\bar f_1,c\psi_1).
\end{equation}
(Recall that $\cU_t(\psi_1)\subset\R^{d-1}$ is as defined in
\eqref{eq:Ut}.)  Since $\psi_1$ is quasi-conformal, we have
$$
\sum_{k= 1}^\infty (e^k\psi_1(e^k))^{d-1}=\infty,
$$
and by the discrete version of (\ref{eq:40}),
$$
\sum_{k=1}^\infty \vol_{\mathbb{R}^{d-1}}(\cU_{k}(\psi_1))=\infty.
$$
For $p>0$, let $\mathcal{T}_p=\mathcal{T}+(-p,p)$, where $\mathcal{T}$
is defined in (\ref{eq:41}).  By the same argument as in
(\ref{eq:Omega1}),
$$
\sum_{k\in\mathcal{T}_p\cap\mathbb{N}}
\vol_{\mathbb{R}^{d-1}}(\cU_{k}(\psi_1))<\infty.
$$
It follows that there exists a sequence $k_n\in\mathbb{N}\backslash
\mathcal{T}_p$, $k_n\to\infty$, such that
\begin{equation*}
  \sum_{n=1}^\infty \vol_{\mathbb{R}^{d-1}}(\cU_{k_n}(\psi_1))=\infty.
\end{equation*}

Let $r_0>0$ be as in Theorem~\ref{thm:vol-B_t}, and $p>0$ is such that
$A_p:=\{a_h:\abs{h}<p\}\subset G_{r_0}$. Define
\[
\cV_{t}(\psi_1)=\bigcap_{\abs{h}<p} e^{-h}\cU_{t+h}(\psi_1).
\]
Since $k_n\notin \mathcal{T}_p$, it follows from (\ref{eq:eq}) that
for sufficiently large $n$, $\cU_{k_n+h}(\psi_1)=\tilde
\cU_{k_n+h}(\psi_1)$ when $|h|<p$, where $\cU_t(\psi_1)$ is defined in
(\ref{eq:tilda-Ut}).  Hence, using that $\psi_1$ is quasi-conformal,
we deduce that
\[
\cV_{k_n}(\psi_1)=\bigcap_{\abs{h}<p}e^{-h}\tilde
\cU_{k_n+h}(\psi_1)=\bigcap_{\abs{h}<p} \tilde
\cU_{k_n+h}(e^{-h}\psi_1)\supset \tilde \cU_{k_n}(c_1\psi_1)
\]
for some $c_1>0$.  In particular,
\begin{equation}
  \label{eq:barUn}
  \sum_{n=1}^\infty \vol_{\mathbb{R}^{d-1}} {(\cV_{k_n}(\psi_1))}=\infty.
\end{equation}

Let $\Psi_n=u(\cV_{k_n}(\psi_1))$, $B=A_pH_{r_0}$, and $B_n=\Psi_nB$
for all $n\in\N$.  One can check that $B$ is open in $ZH$, and in
particular, it has positive measure.  By Theorem~\ref{thm:bcl}, the
set
\[
E=\{z\in G/\Gamma: \#\{n\in\N:z\in a_{k_n}B_ng_0\Gamma\}=\infty\}.
\]
has positive measure.  It follows that the set $\tilde E=\{g\in
G:g\Gamma\in E\}$ has positive measure as well.

Let $g\in \tilde E$. There exist infinitely many $n\in\N$ such that
$g\Gamma\cap a_{k_n}B_ng_0\Gamma\neq\emptyset$.  Hence, there are
infinitely many elements of $\Gamma$ in the set $\cup_{n\ge 1} g^{-1}
a_{k_n}B_ng_0$. By \eqref{eq:12},
\begin{equation*}
  \begin{split}
    a_{k_n}B_ng_0x_0&=a_{k_n}B_nw_0=a_{k_n}\Psi_nA_pw_0 =\bigcup_{\abs{h}<p} a_{k_n+h}u(e^h\cV_{k_n}(\psi_1))w_0\\
    &\subset \bigcup_{\abs{h}<p} a_{k_n+h}u(\cU_{k_n+h}(\psi_1))w_0
    \subset C(\bar f_1,c\psi_1).
  \end{split}
\end{equation*}
By Lemma~\ref{lema:qc}, there exists $\kappa=\kappa(g)\geq 1$ such
that
\[
g\inv C(\bar f_1,c\psi_1)\subset C(\pi(g^{-1}f_1),\kappa c \psi_1).
\]
Hence,
\[
\#(\Gamma x_0\cap C(\pi(g\inv f_1),\kappa c\psi_1))=\infty.
\]
This shows that $\pi(g\inv f_1)\in F$ for every $g\in\tilde E$, where
\begin{equation*}
  F=\{v\in \partial X:\, \exists \kappa\geq 1 \text{ such that }
  \#(\Gamma x_0\cap C(v,\kappa c\psi_1))=\infty\}.  
\end{equation*}
Since $\tilde E$ has positive measure, we conclude that $F$ has
positive measure. It follows from Lemma~\ref{lema:qc} that $F$ is
$\Gamma$-invariant.  Therefore, by Proposition~\ref{prop:Gamma}, $F$
has full measure.

For $v\in F$, there exists a sequence $x_n\in \Gamma x_0$ such that
$\norm{x_n}\to\infty$ and $x_n\in C(v,\kappa c\psi_1)$ for some
$\kappa\geq 1$, i.e.,
\[
\norm{\pi(x_n)-v}\leq \kappa c\psi_1(\norm{x_n}),\quad \forall n\in\N.
\]
Since $\frac{\psi(t)}{\psi_1(t)}\to \infty$ as $t\to\infty$, it
follows that for all sufficiently large $n$,
\[
\norm{\pi(x_n)-v}\leq \psi(\norm{x_n}).
\]
This shows that every element of $F$ is $(X,\psi)$-approximable and
completes the proof of the first part of Theorem~\ref{thm:main-qform}.
\end{proof}

\begin{proof}[Proof of Theorem \ref{thm:main-qform}(ii)]
  Suppose that
  \[
  \int_0^\infty(e^t\psi(e^t))^{d-1}\,dt<\infty.
  \]
  Then by Lemma~\ref{lema:phi}(i), $e^t\psi(e^t)\to 0$ as
  $t\to\infty$.  Let
  \[
  W=\{v\in\partial X:\, \Gamma x_0\cap C_T(v,\psi)
  \neq\emptyset\quad\forall T>1\}.
  \]
  Note that by the theorem of Borel and Harish-Chandra, the set
  $X(\mathbb{Z})$ is a union of finitely many $\Gamma$-orbits. Hence,
  the set of $(X,\psi)$-approximable points is a finite union of sets
  of the form $W$.  It remains to show that $W$ has measure zero.

  Let $\tilde W=\{g\in G:\, \pi(gf_1)\in W\}$ and $\tilde W_0$ a
  bounded subset of $\tilde W$.  By Lemma~\ref{lema:qc}, there exists
  $\kappa>1$ such that
  \[
  g\inv \Gamma x_0\cap C_T(\bar f_1,\kappa\psi)\ne \emptyset,\quad
  \forall g\in\tilde W_0,\;\;T>1,
  \]
  Then by Lemma~\ref{prop:CD}, there exists $c_1>1$ such that
  \begin{equation*}
    g\inv \Gamma\cap D_T(\bar f_1,c_1 \psi)\ne\emptyset, \quad \forall g\in\tilde W_0,\;\;T>1,
  \end{equation*}
  and by (\ref{eq:42}), there exists $c_2>1$ such that
  \begin{equation}
    \label{eq:Dat}
    g\inv \Gamma\cap \left(\bigcup_{t\geq T} a_tu(\cU_t(c_2\psi))w_0\right)\ne\emptyset, \quad \forall
    g\in\tilde W_0,\;\; T>1.
  \end{equation}
  Let
  \[
  B_T=\bigcup_{t\geq T} a_tu(\cU_t(c_2\psi))Hg_0\Gamma/\Gamma\subset
  G/\Gamma.
  \]
  Since $G\cong\hbox{O}(p,q)$, we have that $H\cong\hbox{O}(p-1,q)$ or
  $\hbox{O}(p,q-1)$. Moreover, because $d=p+q\geq 4$, $H$ is a
  semisimple group. Hence, $H_0=\Stab_G(x_0)=g_0^{-1}H_0g_0$ is a
  semisimple group defined over $\Q$, and the space
  $Hg_0\Gamma/\Gamma=g_0H_0\Gamma/\Gamma$ admits a finite
  $H$-invariant measure.  Then there exist constants
  $\kappa_1,\kappa_2>1$ such that
  \begin{equation*}
    \vol_{G/\Gamma}(B_T)\leq \kappa_1 \int_T^\infty \vol_{\mathbb{R}^{d-1}}(\tilde \cU_t(c_2\psi))\,dt\leq
    \kappa_2 \int_T^\infty (e^t\psi(e^t))^{d-1}<\infty.
  \end{equation*}
  Hence, $\vol_{G/\Gamma}(B_T)\to 0$ as $T\to\infty$.  By
  \eqref{eq:Dat}, $\tilde W_0^{-1} \Gamma\subset B_T$ for all $T>1$.
  Therefore, $\vol_{G/\Gamma}(\tilde W_0^{-1}\Gamma)=0$. This implies
  that $\vol_G(\tilde W_0)=\vol_G(\tilde W_0\inv)=0$.  And hence
  $\mu_\infty(\pi(\tilde W f_1))=\mu_\infty(W)=0$. This completes the
  proof of Theorem~\ref{thm:main-qform}.
\end{proof}

\section{Examples}\label{sec:examples}

We give examples to illustrate that the main theorem holds only on a
set of points of full measure, but not everywhere.

Let $Q$ be a positive definite quadratic form with rational
coefficients in $d$ variables.  In this section, we say that a vector
$v\in \{Q=1\}$ is $(\epsilon,s)$-approximable if there exist integer
solutions of
\begin{equation}\label{eq:s}
  \left\|v-\frac{x}{y}\right\|< \frac{\epsilon}{|y|^{s}}, \quad\quad  Q(x)-y^2=-1
\end{equation}
with $|y|\to\infty$.  When $d\ge 3$, our main theorem implies that for
$\epsilon>0$ and $s\in [0,1]$, almost every $v\in\{Q=1\}$ is
$(\epsilon,s)$-approximable, and for $\epsilon>0$ and $s>1$, almost
every $v\in\{Q=1\}$ is not $(\epsilon,s)$-approximable.

\begin{ex}
  Let $d\ge 4$. Then there exist $\epsilon>0$ and a vector
  $v\in\{Q=1\}$ which is not $(\epsilon,1)$-approximable.
\end{ex}

Since $d\ge 4$, by the Meyer theorem, there exists a rational vector
$v$ such that $Q(v)=1$.  Let $k\in\mathbb{N}$ be such that $kv\in
\mathbb{Z}^d$ and
$$\epsilon<\min\left\{\|z\|:\, z\in \frac{1}{k}\mathbb{Z}^d-\{0\}\right\}.$$
If for some $(x,y)\in \mathbb{Z}^{d+1}$,
$$
\|yv-x\|<\epsilon,
$$
then $x=yv$ and $Q(x)=y^2$. Hence, (\ref{eq:s}) fails.

\begin{ex}
  Let $d\ge 2$ and $Q(x)=\sum_{i=1}^d x_i^2$. Then there exists a
  vector $v\in \{Q=1\}$ which is $(\epsilon,2)$-approximable for
  $\epsilon>2/\sqrt{7}$.
\end{ex}

Let $v =(\sqrt{7}/4, 3/4,0,\ldots,0)$.  We claim that there are
infinitely many integer solutions of
\begin{equation}\label{eq:s2}
  y^2\sum_{i=1}^d (x_i-yv_i)^2<\epsilon^2,\quad\quad\sum_{i=1}^d (x_i^2-y^2v_i^2)=1.
\end{equation}
We use that there are infinitely many integer solutions $(k_j,l_j)$,
$k_j,l_j\to\infty$, of the Pell equation $k^2-7l^2=1$ and take
$$
x_1^{(j)}=k_j,\quad y^{(j)}=4l_j,\quad x_i^{(j)}=y^{(j)}v_i,\;\; i>1.
$$
Then $(x^{(j)},y^{(j)})$ satisfies the second condition in
(\ref{eq:s2}) and $x_1^{(j)}-y^{(j)}v_1\to 0$ as $j\to\infty$.  For
$\epsilon>(2v_1)^{-1}=2/\sqrt{7}$ and sufficient large $j$,
$$
y^{(j)}=(2v_1)^{-1}((x_1^{(j)}+v_1y^{(j)})- (x_1^{(j)}-v_1y^{(j)}))<
\epsilon (x_1^{(j)}+v_1y^{(j)}).
$$
Then $(x^{(j)},y^{(j)})$ satisfies the first condition in
(\ref{eq:s2}).  This shows that $v$ is $(\epsilon,2)$-approximable.

\end{document}